\documentclass[11pt]{article}
\usepackage[a4paper,margin=28mm]{geometry}
\usepackage{amsmath,amssymb,amsthm,mathtools}
\usepackage{microtype}
\usepackage[hidelinks]{hyperref}
\usepackage{enumitem}
\usepackage{tikz}
\newtheorem{theorem}{Theorem}[section]
\newtheorem{proposition}[theorem]{Proposition}
\newtheorem{lemma}[theorem]{Lemma}
\newtheorem{corollary}[theorem]{Corollary}
\newtheorem{conjecture}[theorem]{Conjecture}
\theoremstyle{remark}
\newtheorem{remark}[theorem]{Remark}

\newcommand{\width}{\operatorname{width}}
\newcommand{\Var}{\operatorname{Var}}
\newcommand{\Cov}{\operatorname{Cov}}
\newcommand{\N}{\mathbb N}

\title{Antichains among Divisor Sums of Divisors}
\author{Felix Huber\\
\small Independent researcher, Lucerne, Switzerland\\
\small \texttt{felix.68@gmx.ch}\\
\small ORCID: \href{https://orcid.org/0009-0005-1568-1579}{0009-0005-1568-1579}}
\date{}

\begin{document}
\maketitle

\begin{abstract}
For a positive integer \(n\), let
\[
S(n)=\{\sigma(d):d\mid n\}
\]
be ordered by divisibility, and let \(a(n)\) be its width. We study how much of the divisor lattice of \(n\) survives under the map \(d\mapsto\sigma(d)\). For every fixed prime-exponent pattern, the largest possible value of \(a(n)\) is the width of a corresponding product of divisibility posets, and this maximum is attained infinitely often. In the squarefree case with \(\omega(n)=r\), every width from \(1\) to \(\binom r{\lfloor r/2\rfloor}\) occurs, again infinitely often.

For repeated exponent patterns, Engel's limit theory reduces the asymptotic width problem to optimal representation variance. We show that for the divisibility poset \(D_m=(\{1,\ldots,m\},\mid)\), the prime directions \(2\) and \(3\) already determine the exact variance. The structural reason is the critical dilation \(4\), lying strictly between \(3\) and the next prime \(5\). This yields an exact finite formula, the limit \(V(D_m)\to 1.0367934201598\ldots\), and asymptotics for repeated exponent patterns. The complementary minimum-width problem remains largely open.
\end{abstract}

\medskip
\noindent\textbf{Key words and phrases.} divisibility poset; antichain; Sperner theory; sum-of-divisors function; optimal representation; divisor lattice.

\smallskip
\noindent\textbf{2020 Mathematics Subject Classification.} 06A07, 05D05, 11A25.

\section{Introduction}
For a positive integer \(n\), consider
\[
S(n)=\{\sigma(d):d\mid n\}.
\]
We order this finite set by divisibility and write
\[
a(n)=\width(S(n),\mid).
\]
Thus \(a(n)\) is the largest number of distinct divisor sums, taken from divisors of the same integer \(n\), such that no one divides another.

For example, if \(n=6\),
\[
S(6)=\{1,3,4,12\}.
\]
The two middle values are incomparable, so \(a(6)=2\). The initial values of the sequence are
\[
1,1,1,2,1,2,1,2,2,1,1,3,1,2,2,3,1,3,1,2,\ldots.
\]

\begin{figure}[ht]
\centering
\begin{minipage}{0.46\textwidth}
\centering
\begin{tikzpicture}[scale=0.95, every node/.style={circle,draw,inner sep=2pt,font=\small}]
  \node (one) at (0,0) {$1$};
  \node (three) at (-1,1.3) {$3$};
  \node (four) at (1,1.3) {$4$};
  \node (twelve) at (0,2.6) {$12$};
  \draw (one)--(three)--(twelve);
  \draw (one)--(four)--(twelve);
\end{tikzpicture}

\smallskip
$S(6)$
\end{minipage}
\hfill
\begin{minipage}{0.46\textwidth}
\centering
\begin{tikzpicture}[scale=0.95, every node/.style={circle,draw,inner sep=2pt,font=\small}]
  \node (d1) at (0,0) {$1$};
  \node (d2) at (-1.2,1.2) {$2$};
  \node (d3) at (0,1.8) {$3$};
  \node (d5) at (1.2,1.8) {$5$};
  \node (d4) at (-1.2,2.7) {$4$};
  \draw (d1)--(d2)--(d4);
  \draw (d1)--(d3);
  \draw (d1)--(d5);
\end{tikzpicture}

\smallskip
$D_5=(\{1,\ldots,5\},\mid)$
\end{minipage}
\caption{Two basic divisibility posets. The first nontrivial set $S(6)=\{1,3,4,12\}$ is a diamond. For a prime fourth power, the divisor sums have the order structure of $D_5$; this is proved in Theorem~\ref{thm:primepower}.}
\label{fig:first-posets}
\end{figure}

Sets with no internal divisibility are usually called \emph{primitive sets}; they form a classical meeting point of multiplicative number theory and extremal combinatorics \cite{ErdosPrimitive,Lichtman2023}. Here the map
\[
d\longmapsto \sigma(d)
\]
sends the divisor lattice of one integer into the divisibility poset of the positive integers, and we ask how much order structure survives. For a prime power, divisibility among the values \(\sigma(p^{k-1})=1+p+\cdots+p^{k-1}\) is controlled exactly by divisibility of the index \(k\). With several prime factors, additional \emph{cross-divisibilities} between different prime bases become the main arithmetic difficulty.

For a fixed exponent pattern \(\mathbf e=(e_1,\ldots,e_r)\), we show that the primes can be chosen so that only the index-forced relations remain. Hence the maximal problem becomes the width problem for
\[
D_{e_1+1}\times\cdots\times D_{e_r+1}.
\]
In the squarefree case the extremal widths are \(1\) and the central binomial coefficient; a one-parameter threshold construction shows that every intermediate width occurs as well.

Repeated exponent patterns lead to powers of \(D_m=(\{1,\ldots,m\},\mid)\), bringing in Engel's theory of optimal representations and limit Sperner theorems \cite{Engel1986,EngelBook}. Divisibility orders on initial intervals have also been studied through order dimension \cite{LewisSouza,Haiman2024}; here the relevant invariant is the minimum representation variance \(V(D_m)\). We prove that the prime directions \(2\) and \(3\) already determine this variance. The critical dilation is \(4\), and
\[
3<4<5
\]
explains why \(3\) must be retained while every prime at least \(5\) is redundant. This yields an exact formula for \(V(D_m)\), its limiting constant, and asymptotics for repeated exponent patterns.

Thus cross-divisibilities are eliminated in the maximal problem, controlled in the squarefree spectrum, and reduced to their active prime directions in the variance problem. The complementary minimum-width problem is discussed in Section~7 and remains largely open.

\clearpage
\section{Basic properties and prime powers}

\begin{proposition}\label{prop:monotone}
If $m\mid n$, then $a(m)\le a(n)$.
\end{proposition}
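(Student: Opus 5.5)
The plan is to show that \(S(m)\) sits inside \(S(n)\) as a subposet, with the same divisibility order, and then use that width does not decrease when passing to a larger poset.

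First, if \(m\mid n\), every divisor \(d\) of \(m\) is also a divisor of \(n\), by transitivity of divisibility. Hence \(\sigma(d)\in S(n)\) for each \(d\mid m\), which gives the inclusion
\[
S(m)=\{\sigma(d):d\mid m\}\subseteq\{\sigma(d):d\mid n\}=S(n).
\]

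Second, both sets carry the restriction of the same relation, divisibility on the positive integers. So \((S(m),\mid)\) is an induced subposet of \((S(n),\mid)\). Take an antichain \(A\subseteq S(m)\) of maximum size \(a(m)\). No element of \(A\) divides another, and this is a statement about the integers themselves, not about the ambient set. Therefore \(A\) is still an antichain when viewed inside \(S(n)\), and we get \(a(n)=\width(S(n),\mid)\ge |A|=a(m)\).

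No step here is a real obstacle. The only point needing care is that the order on \(S(m)\) is the induced one, not something intrinsic to \(m\), so an antichain in \(S(m)\) remains an antichain in \(S(n)\). The proof does not need to be injective on divisors: distinct divisors with equal \(\sigma\)-values are counted once in both sets, and the inclusion argument is unaffected.
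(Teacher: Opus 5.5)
Your proposal is correct and follows the paper's proof: transitivity of divisibility gives $S(m)\subseteq S(n)$, and an antichain under divisibility in $S(m)$ stays an antichain in $S(n)$. Your remarks that the order on $S(m)$ is the induced one and that coinciding $\sigma$-values cause no trouble are accurate, but they are not needed for the argument.
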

\begin{proof}
Every divisor of $m$ is a divisor of $n$, hence $S(m)\subseteq S(n)$. Every antichain in $S(m)$ is therefore an antichain in $S(n)$.
\end{proof}

For a prime $p$, put
\[
R_k(p)=1+p+\cdots+p^{k-1}=\frac{p^k-1}{p-1}.
\]

\begin{lemma}\label{lem:repunit-div}
For $i,j\ge1$, one has $R_i(p)\mid R_j(p)$ if and only if $i\mid j$.
\end{lemma}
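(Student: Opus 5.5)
The plan is to reduce the statement to the standard gcd identity for repunits, $\gcd(R_i(p),R_j(p))=R_{\gcd(i,j)}(p)$, and then compare sizes. Since $p\ge2$, the map $k\mapsto R_k(p)$ is strictly increasing in $k\ge1$ with $R_1(p)=1$. This monotonicity turns a statement about divisibility of repunits into a statement about divisibility of their indices.

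For the easy direction, suppose $i\mid j$ and write $j=iq$. Then
\[
R_j(p)=\frac{p^{iq}-1}{p-1}=\frac{p^i-1}{p-1}\bigl(1+p^i+p^{2i}+\cdots+p^{(q-1)i}\bigr)=R_i(p)\,R_q(p^i),
\]
so $R_i(p)\mid R_j(p)$.

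For the converse, I will prove the gcd identity by running the Euclidean algorithm on the indices. The key step is the identity
\[
R_{j}(p)=p^{i}R_{j-i}(p)+R_i(p)\qquad (j>i).
\]
This is immediate by splitting the sum $1+p+\cdots+p^{j-1}$ into its first $i$ terms and the remaining $j-i$ terms. It gives $\gcd(R_i,R_j)=\gcd(R_i,R_{j-i})$, and descending as in Euclid we arrive at $R_{\gcd(i,j)}(p)$. Alternatively, iterating the identity gives $R_j\equiv R_{j\bmod i}\pmod{R_i}$, using $R_0=0$.

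Now suppose $R_i(p)\mid R_j(p)$. Then $R_i(p)=\gcd(R_i(p),R_j(p))=R_{\gcd(i,j)}(p)$. By strict monotonicity, $i=\gcd(i,j)$, that is, $i\mid j$.

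The only point needing care, and it is minor, is the strict monotonicity. This holds because $R_{k+1}-R_k=p^k>0$. The case $i=1$ is trivial, since $R_1=1$ divides everything and $1$ divides every index.

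A more direct route avoids the gcd identity altogether. Write $j=qi+s$ with $0\le s<i$. The congruence gives $R_j\equiv R_s\pmod{R_i}$, and $0\le R_s<R_i$. So $R_i\mid R_j$ forces $R_s=0$, hence $s=0$. I would present this version as the proof.
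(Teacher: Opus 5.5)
Your proof is correct and is essentially the paper's. The paper also gets the forward direction from the factorization of $p^j-1$, and the converse from $\gcd(R_i(p),R_j(p))=R_{\gcd(i,j)}(p)$ together with the (tacit) injectivity of $k\mapsto R_k(p)$. The paper quotes the gcd identity without proof; your remainder version $R_j\equiv R_{j\bmod i}\pmod{R_i}$ is one Euclidean step of that identity and makes the argument self-contained.

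Two small repairs are needed:
\begin{itemize}
\item Iterating $R_j=p^iR_{j-i}+R_i$ as written only gives $R_j\equiv p^{qi}R_s\pmod{R_i}$. Either add $p^i\equiv1\pmod{R_i}$, which follows from $p^i-1=(p-1)R_i$, or split off the last $i$ terms instead, $R_j=R_{j-i}+p^{j-i}R_i$.
\item Replacing $\gcd(R_i,p^iR_{j-i})$ by $\gcd(R_i,R_{j-i})$ needs $\gcd(R_i,p)=1$, which holds since $R_i\equiv1\pmod p$.
\end{itemize}
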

\begin{proof}
If $i\mid j$, the divisibility follows from the factorization of $x^j-1$. Conversely,
\[
\gcd(R_i(p),R_j(p))=R_{\gcd(i,j)}(p).
\]
Thus $R_i(p)\mid R_j(p)$ forces $i=\gcd(i,j)$, hence $i\mid j$.
\end{proof}

\begin{theorem}\label{thm:primepower}
For every prime $p$ and every $e\ge0$,
\[
\boxed{
a(p^e)=\left\lceil\frac{e+1}{2}\right\rceil.
}
\]
\end{theorem}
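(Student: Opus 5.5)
The plan is to reduce $S(p^e)$ to the divisibility poset $D_{e+1}$ and then compute the width of $D_{e+1}$ directly.

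\textbf{Reduction to $D_{e+1}$.} The divisors of $p^e$ are $p^{k-1}$ for $k=1,\dots,e+1$, and $\sigma(p^{k-1})=R_k(p)$. These values are strictly increasing in $k$, so they are pairwise distinct. By Lemma~\ref{lem:repunit-div}, $R_i(p)\mid R_j(p)$ holds exactly when $i\mid j$. Hence the map $k\mapsto R_k(p)$ is an order isomorphism from $D_{e+1}=(\{1,\dots,e+1\},\mid)$ onto $(S(p^e),\mid)$, and $a(p^e)=\width(D_{e+1})$. It therefore suffices to show that $\width(D_m)=\lceil m/2\rceil$ with $m=e+1$.

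\textbf{Lower bound.} The set $\{k:\lfloor m/2\rfloor<k\le m\}$ is an antichain. If $i\mid j$ with $i<j$, then $j\ge 2i>m$, which is impossible. This set has $m-\lfloor m/2\rfloor=\lceil m/2\rceil$ elements.

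\textbf{Upper bound.} Partition $\{1,\dots,m\}$ into chains indexed by the odd numbers $q\le m$: the chain for $q$ is $\{q,2q,4q,\dots\}\cap[1,m]$. Each of these is a chain under divisibility. Every integer can be written uniquely as $2^a q$ with $q$ odd, so the chains partition the set. There are exactly $\lceil m/2\rceil$ odd numbers in $[1,m]$. An antichain meets each chain in at most one element, so its size is at most $\lceil m/2\rceil$.

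None of these steps is a real obstacle. The only points that need care are that the $R_k(p)$ are distinct, which is why the map is injective and $S(p^e)$ has exactly $e+1$ elements, and that Lemma~\ref{lem:repunit-div} gives the equivalence in both directions, so the image poset has no extra relations. Together these give $a(p^e)=\lceil (e+1)/2\rceil$.
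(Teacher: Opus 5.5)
Your proposal is correct and follows the paper's proof: you identify $S(p^e)$ with $D_{e+1}$ via Lemma~\ref{lem:repunit-div}, take the top half $\{\lfloor m/2\rfloor+1,\ldots,m\}$ as the antichain, and cover by the odd-part chains $u,2u,4u,\ldots$. The only difference is that the paper cites Dilworth's theorem for the upper bound, while you use directly the elementary fact that an antichain meets each chain at most once, which is all that is needed here.
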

\begin{proof}
Since
\[
S(p^e)=\{R_1(p),\ldots,R_{e+1}(p)\},
\]
Lemma~\ref{lem:repunit-div} identifies this poset with $D_{e+1}$. The integers
\[
\left\lfloor\frac{e+1}{2}\right\rfloor+1,\ldots,e+1
\]
form an antichain. Conversely, partition $1,\ldots,e+1$ according to odd part; every class is a chain of the form $u,2u,4u,\ldots$. There are $\lceil(e+1)/2\rceil$ such chains, so Dilworth's theorem \cite{Dilworth} gives the result.
\end{proof}

\begin{corollary}
Every positive integer occurs as a value of $a(n)$.
\end{corollary}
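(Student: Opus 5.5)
The plan is to read the corollary directly off Theorem~\ref{thm:primepower}, using prime powers alone. Fix a positive integer $k$ and take any prime $p$, for concreteness $p=2$. We look for an exponent $e\ge0$ with $\lceil (e+1)/2\rceil = k$. The choice $e=2k-2$, which is nonnegative for $k\ge1$, gives
\[
\left\lceil\frac{2k-1}{2}\right\rceil = k .
\]
By Theorem~\ref{thm:primepower}, $a(p^{2k-2})=k$. The choice $e=2k-1$ also works, since it gives $\lceil 2k/2\rceil=k$.

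Both values of $e$ can be used with any prime $p$, so each $k$ is attained by infinitely many $n$. This is more than the corollary asks for, but it costs nothing. As a check, $k=1$ corresponds to $n=1$ or $n=p$, and $k=2$ to $n=p^2$ or $n=p^3$; these agree with the listed initial values $a(1)=a(2)=1$ and $a(4)=a(8)=2$.

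None of these steps is difficult: once the closed formula of Theorem~\ref{thm:primepower} is available, the corollary is just a check that the map $e\mapsto\lceil(e+1)/2\rceil$ from $\N_0$ onto $\N$ is surjective. All of the substantive work was already done in that theorem, namely the identification of $S(p^e)$ with $D_{e+1}$ through Lemma~\ref{lem:repunit-div}, and the Dilworth chain decomposition by odd parts.
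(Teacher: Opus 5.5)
Your proof is correct and is the paper's argument: take $e=2k-2$ in Theorem~\ref{thm:primepower} to get $a(p^{2k-2})=k$. Your side remarks, that $e=2k-1$ also works and that every prime gives infinitely many realizations, are correct but not needed.
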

\begin{proof}
For $k\ge1$, Theorem~\ref{thm:primepower} gives $a(p^{2k-2})=k$.
\end{proof}

\begin{proposition}\label{prop:a1-squarefree}
If $a(n)=1$, then $n$ is squarefree.
\end{proposition}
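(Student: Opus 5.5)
We argue by contraposition: if $n$ is not squarefree, then $a(n)\ge2$. So suppose $p^2\mid n$ for some prime $p$. By Proposition~\ref{prop:monotone} it suffices to show $a(p^2)\ge 2$, because $p^2\mid n$ then gives $a(n)\ge a(p^2)\ge 2$.

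Theorem~\ref{thm:primepower} with $e=2$ gives this directly:
\[
a(p^2)=\left\lceil\tfrac{3}{2}\right\rceil=2 .
\]
Concretely, $S(p^2)=\{1,\,1+p,\,1+p+p^2\}$. By Lemma~\ref{lem:repunit-div}, the elements $R_2(p)=1+p$ and $R_3(p)=1+p+p^2$ are incomparable, since neither of $2$ and $3$ divides the other. This exhibits the required antichain of size two.

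There is essentially no obstacle here. The only point to check is that the antichain found inside $S(p^2)$ remains an antichain inside $S(n)$. This holds because both sets carry the same divisibility order on $\mathbb N$, which is exactly the content of Proposition~\ref{prop:monotone}. Hence $a(n)=1$ forces $n$ to have no square prime factor, so $n$ is squarefree.
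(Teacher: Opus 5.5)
Your proof is correct and is essentially the paper's argument: both show that $p^2\mid n$ places the incomparable pair $\{p+1,\;p^2+p+1\}$ inside $S(n)$. The only difference is that the paper checks incomparability directly from $p^2+p+1>p+1$ and $p^2+p+1\equiv 1\pmod{p+1}$, whereas you obtain it from Lemma~\ref{lem:repunit-div} (or Theorem~\ref{thm:primepower}) combined with Proposition~\ref{prop:monotone}; that route is equally valid.
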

\begin{proof}
If $p^2\mid n$, then $p+1$ and $p^2+p+1$ belong to $S(n)$. The latter is larger than $p+1$ and is congruent to $1$ modulo $p+1$, so these two values are incomparable.
\end{proof}

\begin{proposition}\label{prop:pq}
Let $p<q$ be primes. Then
\[
a(pq)=
\begin{cases}
1,&p+1\mid q+1,\\
2,&p+1\nmid q+1.
\end{cases}
\]
\end{proposition}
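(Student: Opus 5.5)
We compute $S(pq)$ directly and check every pair of elements for divisibility. Since $\sigma$ is multiplicative and $p\ne q$,
\[
S(pq)=\{1,\;p+1,\;q+1,\;(p+1)(q+1)\}.
\]
These four values are distinct, because $1<p+1<q+1<(p+1)(q+1)$. The element $1$ divides everything, and $(p+1)(q+1)$ is divisible by each of the other three. So the poset is a chain or a diamond according to the single remaining question: whether $p+1$ and $q+1$ are comparable under divisibility.

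Because $p+1<q+1$, the relation $q+1\mid p+1$ is impossible. Hence $p+1$ and $q+1$ are comparable exactly when $p+1\mid q+1$.

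\emph{Case $p+1\mid q+1$.} All four elements form a chain $1\mid p+1\mid q+1\mid (p+1)(q+1)$, so the width is $1$.

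\emph{Case $p+1\nmid q+1$.} The set $\{p+1,q+1\}$ is an antichain, so $a(pq)\ge2$. For the upper bound, write $S(pq)$ as the union of the two chains
\[
1\mid p+1\mid (p+1)(q+1)\qquad\text{and}\qquad q+1\mid (p+1)(q+1).
\]
Every antichain meets each chain in at most one element, so $a(pq)\le 2$. Dilworth's theorem is not needed here.

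The argument has no real obstacle. The only points to verify are that the four values are distinct, so that the set really has four elements, and that $q+1\mid p+1$ is excluded by size. Both follow from $p<q$.
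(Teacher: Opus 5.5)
Your proof is correct and follows the paper's argument: you list $S(pq)=\{1,p+1,q+1,(p+1)(q+1)\}$ and observe that the only pair whose comparability is in question is $p+1,q+1$. You also spell out details the paper leaves implicit, namely that the four values are distinct, that $q+1\mid p+1$ is excluded by size, and an explicit two-chain cover for the upper bound.
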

\begin{proof}
We have
\[
S(pq)=\{1,p+1,q+1,(p+1)(q+1)\}.
\]
The only possible incomparability is between $p+1$ and $q+1$.
\end{proof}

\section{Squarefree integers}
Suppose
\[
n=p_1\cdots p_r
\]
is squarefree. For $I\subseteq[r]$,
\[
\sigma\!\left(\prod_{i\in I}p_i\right)
=\prod_{i\in I}(p_i+1).
\]

\begin{theorem}\label{thm:sq-upper}
If $n$ is squarefree and $\omega(n)=r$, then
\[
\boxed{
a(n)\le \binom{r}{\lfloor r/2\rfloor}.
}
\]
\end{theorem}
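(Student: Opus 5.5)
The natural plan is to pull back an antichain in $S(n)$ to an antichain in the Boolean lattice $2^{[r]}$ and then apply Sperner's theorem. The map $\Phi\colon 2^{[r]}\to S(n)$, $I\mapsto \prod_{i\in I}(p_i+1)$, is surjective onto $S(n)$, since every divisor of the squarefree $n$ is $\prod_{i\in I}p_i$ for some $I\subseteq[r]$. It is also order preserving: if $I\subseteq J$, then $\Phi(I)\mid\Phi(J)$ because $\Phi(J)=\Phi(I)\prod_{i\in J\setminus I}(p_i+1)$.

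Now let $A\subseteq S(n)$ be an antichain under divisibility. For each $s\in A$ choose one preimage $I_s\in\Phi^{-1}(s)$. The sets $I_s$ are pairwise distinct because their images are distinct. If $I_s\subseteq I_t$ for some $s\neq t$, then $s=\Phi(I_s)\mid\Phi(I_t)=t$, which contradicts the antichain property. Hence $\{I_s:s\in A\}$ is an antichain in $(2^{[r]},\subseteq)$ of size $|A|$.

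By Sperner's theorem, every antichain in $2^{[r]}$ has at most $\binom{r}{\lfloor r/2\rfloor}$ elements. Taking $A$ to be a maximum antichain gives $a(n)\le\binom{r}{\lfloor r/2\rfloor}$.

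There is no real obstacle here. The only subtle point is that $\Phi$ need not be injective, since distinct subsets can have equal products. This is handled by choosing a single preimage for each element of $A$ and using only the fact that $\Phi$ is order preserving.
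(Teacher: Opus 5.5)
Your proof is correct and takes the same route as the paper: choose one representing subset for each element of the antichain, note that inclusion of subsets forces divisibility of the corresponding products of the $(p_i+1)$, and apply Sperner's theorem. Your remark that $\Phi$ need not be injective is exactly the point the paper handles by choosing a single representative per value.
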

\begin{proof}
Choose one subset $I\subseteq[r]$ representing each value in an antichain of $S(n)$. If $I\subseteq J$, then the product indexed by $I$ divides the product indexed by $J$, which is impossible for two distinct values in the chosen antichain. The chosen subsets therefore form an antichain in the Boolean lattice $B_r$. Sperner's theorem \cite{Sperner} gives the bound.
\end{proof}

\begin{theorem}\label{thm:sq-min}
For every $r\ge1$, infinitely many squarefree integers $n$ satisfy
\[
\omega(n)=r,
\qquad
a(n)=1.
\]
\end{theorem}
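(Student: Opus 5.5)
We construct $n$ so that the values $\sigma(d)$, $d\mid n$, form a chain under divisibility. The natural construction is a chain of primes $p_1<p_2<\cdots<p_r$ with
\[
p_1+1\mid p_2+1\mid\cdots\mid p_r+1 .
\]
Writing $M_i=p_i+1$, every subset $I\subseteq[r]$ has $\sigma(\prod_{i\in I}p_i)=\prod_{i\in I}M_i$. We will then prove that these products are totally ordered by divisibility, which gives $a(n)=1$.

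\textbf{Chain property.} A plain divisor chain $M_1\mid M_2\mid\cdots$ is not enough. For example, $M_1M_2$ and $M_3$ need not be comparable. It is simpler to impose the strong condition
\[
M_1M_2\cdots M_{i-1}\mid M_i\qquad(2\le i\le r).
\]
Assume this holds and take $I\ne J$. Let $t$ be the largest index lying in exactly one of the two sets, say $t\in J\setminus I$. Then
\[
\prod_{i\in I,\,i<t}M_i \;\Big|\; \prod_{i<t}M_i \;\Big|\; M_t .
\]
Multiplying by the common factor $\prod_{i\in I,\,i>t}M_i=\prod_{i\in J,\,i>t}M_i$ shows that $\prod_{i\in I}M_i$ divides $\prod_{i\in J}M_i$. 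So any two values in $S(n)$ are comparable, $S(n)$ is a chain, and its width is $1$. This is consistent with Proposition~\ref{prop:pq} in the case $r=2$.

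\textbf{Construction of the primes.} We choose the primes inductively. The first is $p_1=2$, so $M_1=3$. Given $p_1,\ldots,p_{i-1}$, put $Q=\prod_{j<i}(p_j+1)$. We need a prime $p_i>p_{i-1}$ with $p_i\equiv -1\pmod Q$. Since $\gcd(-1,Q)=1$, Dirichlet's theorem on primes in arithmetic progressions supplies infinitely many such primes. Take one larger than $p_{i-1}$; then $Q\mid p_i+1$, which is exactly the required condition.

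\textbf{Infinitely many $n$.} At the last step $i=r$ there are infinitely many admissible choices of $p_r$. For $r=1$, any prime $p$ works, since $S(p)=\{1,p+1\}$ is a chain. The resulting integers $n=p_1\cdots p_r$ are distinct and squarefree with $\omega(n)=r$, and each has $a(n)=1$.

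The only genuinely arithmetic input is Dirichlet's theorem. If one wants to avoid it, the special progression $-1 \bmod Q$ can be handled by the elementary Euclid-type argument for primes $\equiv -1\pmod m$, but this works only for moduli with the right structure. Citing Dirichlet is the cleaner route.
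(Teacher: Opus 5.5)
Your proof is correct and is essentially the paper's argument: your modulus $Q=\prod_{j<i}(p_j+1)$ is exactly $\sigma(p_1\cdots p_{i-1})$, and the paper likewise adjoins a new prime $q\equiv-1\pmod{\sigma(n)}$ supplied by Dirichlet's theorem. The only difference is in the verification: you check the chain property globally via the largest index in the symmetric difference, whereas the paper argues inductively that $S(nq)=S(n)\cup(q+1)S(n)$, with every element of the lower copy dividing every element of the upper copy.
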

\begin{proof}
The assertion is clear for $r=1$. Proceed by induction. Suppose $n$ is squarefree and $S(n)$ is a chain, and put $M=\sigma(n)$. By Dirichlet's theorem \cite{Dirichlet}, there are infinitely many primes
\[
q\equiv-1\pmod M.
\]
Choose one with $q\nmid n$.
Then
\[
S(nq)=S(n)\cup(q+1)S(n).
\]
The two copies are chains, and every element of the first copy divides every element of the second because every element of $S(n)$ divides $M$ and hence divides $q+1$. Thus the union is again a chain.
\end{proof}

\section{Fixed exponent patterns}
The prime-power case suggests a natural model for a general exponent vector. If
\[
n=p_1^{e_1}\cdots p_r^{e_r},
\]
then every divisor sum is a product of repunits \(R_k(p_i)\). If one ignores divisibilities between different prime bases, the index set is the product poset
\[
P_{\mathbf e}
=
D_{e_1+1}\times\cdots\times D_{e_r+1},
\qquad
\mathbf e=(e_1,\ldots,e_r).
\]
The next theorem shows that this naive model is the exact maximal one: all additional cross-divisibilities can be avoided simultaneously.

\begin{theorem}\label{thm:fixed-pattern}
For every fixed exponent vector $\mathbf e$,
\[
\boxed{
\max_{p_1,\ldots,p_r}
 a(p_1^{e_1}\cdots p_r^{e_r})
=\width(P_{\mathbf e}),
}
\]
where the maximum is over distinct primes. Moreover, the maximum is attained for infinitely many choices of $p_1,\ldots,p_r$.
\end{theorem}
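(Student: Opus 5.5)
The proof has two parts: an upper bound valid for all choices of primes, and a construction of infinitely many prime tuples for which no cross-divisibilities occur.

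\emph{Upper bound.} Write each divisor sum as $\sigma(\prod p_i^{k_i-1})=\prod_i R_{k_i}(p_i)$ with $1\le k_i\le e_i+1$. This gives a surjection $P_{\mathbf e}\to S(n)$. If $\mathbf k\le\mathbf l$ in $P_{\mathbf e}$, then $k_i\mid l_i$ for every $i$. By Lemma~\ref{lem:repunit-div} this gives $R_{k_i}(p_i)\mid R_{l_i}(p_i)$, so the image of $\mathbf k$ divides the image of $\mathbf l$; the map is order-preserving. Given an antichain in $S(n)$, choose one preimage for each of its elements. Any two chosen preimages are incomparable in $P_{\mathbf e}$, since comparability would force divisibility of two distinct elements of the antichain. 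Hence $a(n)\le\width(P_{\mathbf e})$ for every choice of primes, exactly as in the proof of Theorem~\ref{thm:sq-upper}.

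\emph{Lower bound.} It suffices to find infinitely many prime tuples for which the map $\mathbf k\mapsto\prod_i R_{k_i}(p_i)$ is an order embedding. Then it is injective, divisibility of images reflects the order of $P_{\mathbf e}$, and the image of a maximum antichain is an antichain of full size. Suppose $\prod_i R_{k_i}(p_i)\mid\prod_i R_{l_i}(p_i)$ while $k_j\nmid l_j$ for some $j$. I will rule this out using primitive prime divisors: by Zsigmondy (or Bang), for $k\ge2$ the number $p^k-1$ has a prime factor that does not divide $p^t-1$ for any $t<k$, apart from the usual small exceptions, which I will avoid by taking the primes large. Call such a factor $\ell_j$ for $p_j^{k_j}-1$; it divides $R_{k_j}(p_j)$, since it does not divide $p_j-1$. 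It cannot divide $R_{l_j}(p_j)$ unless $k_j\mid l_j$. Therefore $\ell_j$ must divide $R_{l_i}(p_i)$ for some $i\neq j$; this is the genuine cross-divisibility to be excluded.

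\emph{Main step.} I will choose the primes inductively so that these cross-coincidences never happen. Let $E=\max_i(e_i+1)$. After $p_1,\ldots,p_{j-1}$ have been fixed, consider the finite set $Q_j$ of all primes dividing some $R_k(p_i)$ with $i<j$ and $k\le E$. Choose $p_j$ larger than everything so far and satisfying $p_j\equiv 1\pmod{\ell}$ for every $\ell\in Q_j$, which is possible by Dirichlet's theorem. Then for $2\le k\le E$ we get $R_k(p_j)\equiv k\pmod{\ell}$. If in addition every $\ell\in Q_j$ satisfies $\ell>E$ (ensured by sizes, see below), then no $\ell\in Q_j$ divides any $R_k(p_j)$ with $2\le k\le E$. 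Consequently the primes dividing $R_k(p_j)$ are disjoint from those dividing all earlier $R_{k'}(p_i)$. Applying this at every stage makes the repunit values attached to different bases pairwise coprime, once the value $R_1=1$ is discarded. With pairwise coprimality, the divisibility $\prod_i R_{k_i}(p_i)\mid\prod_i R_{l_i}(p_i)$ forces $R_{k_i}(p_i)\mid R_{l_i}(p_i)$ for each $i$. Lemma~\ref{lem:repunit-div} then gives $k_i\mid l_i$, and the order embedding follows without needing Zsigmondy at all.

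\emph{Expected obstacle.} The delicate point is the prime factors $\ell\le E$ that may divide $R_k(p_i)$. To handle them, I will additionally choose every $p_i\equiv 1\pmod{L}$, where $L$ is the product of all primes up to $E$. Then $R_k(p_i)\equiv k\pmod{\ell}$ for each such $\ell$, so the small primes dividing the repunits are predetermined. This breaks the coprimality argument above, because $k$ itself can share small factors across different bases. I therefore expect the main work to be a separate control of the $\ell$-adic valuations for $\ell\le E$. Using lifting-the-exponent with $p_i\equiv 1\pmod{\ell}$, we have $v_\ell(R_k(p_i))=v_\ell(k)$. Since this equals $v_\ell$ of the index itself, the small-prime part of $\prod_i R_{k_i}(p_i)$ is the small-prime part of $\prod_i k_i$, while the large-prime parts remain coprime across bases and are handled as before. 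I would then need to check that a divisibility of the small-prime parts, together with the coordinatewise reflection on large primes, still forces $k_i\mid l_i$. If it does not, the first thing I would try is to replace the congruence $p_i\equiv1\pmod{L}$ by $p_i\equiv -1$ or another residue modulo suitable primes, chosen so that small primes divide the repunits of only one base. Infinitude of the admissible tuples comes for free from Dirichlet's theorem at each step.
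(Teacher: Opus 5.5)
Your upper bound is correct and is the paper's argument. The lower bound is not established. The ``Main step'' assumes that every $\ell\in Q_j$ exceeds $E$, and no choice of sizes can arrange this. Let $\ell\le E$ be a prime different from $p$ and put $d=\operatorname{ord}_\ell p$. If $d\ge2$, then $\ell\mid R_d(p)$ with $2\le d\le \ell-1<E$. If $d=1$, then $\ell\mid R_\ell(p)$. Hence for $j\ge2$ the set $Q_j$ contains every prime $\ell\le E$ other than $p_1$. For such $\ell$, your congruence $p_j\equiv1\pmod\ell$ then forces $\ell\mid R_\ell(p_j)$. Most simply, $R_2(p_i)=p_i+1$ and $R_2(p_j)=p_j+1$ are both even. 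So repunits at different bases cannot be made pairwise coprime. The deduction that $\prod_iR_{k_i}(p_i)\mid\prod_iR_{l_i}(p_i)$ forces $R_{k_i}(p_i)\mid R_{l_i}(p_i)$ is therefore unavailable. You notice this under ``Expected obstacle''. What follows there, though, is a plan whose decisive check you explicitly leave open, so as written only the inequality $\le$ is proved.

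Your repair can be completed, but two ingredients are missing. Write $F(\mathbf k)=\prod_iR_{k_i}(p_i)$.
\begin{enumerate}
\item The condition $p_i\equiv1\pmod L$ gives only oddness at $\ell=2$, and for $p_i\equiv3\pmod4$ one has $v_2(R_2(p_i))=v_2(p_i+1)\ge2$. You must also impose $p_i\equiv1\pmod4$ to get $v_\ell(R_k(p_i))=v_\ell(k)$ for every prime $\ell\le E$. Then $R_k(p_i)=k\,t_{i,k}$ with $t_{i,k}$ free of primes $\le E$. Your inductive congruences make the $t_{i,\cdot}$ coprime across bases, so $F(\mathbf k)\mid F(\mathbf l)$ yields $t_{i,k_i}\mid t_{i,l_i}$ for each $i$.
\item Lemma~\ref{lem:repunit-div} says nothing about the $t$'s, so ``handled as before'' is not an argument. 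What closes the gap: $\gcd(R_k,R_l)=R_g$ with $g=\gcd(k,l)$ gives $\gcd(t_{i,k},t_{i,l})=t_{i,g}$. So $t_{i,k}\mid t_{i,l}$ forces $R_k(p_i)/R_g(p_i)=k/g$. This is impossible for $g<k$, since the left side is at least $1+p_i^g>E\ge k/g$ (as $p_i>E$).
\end{enumerate}

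The paper sidesteps all of this and needs no coprimality and no small-prime bookkeeping. For each pair $(i,k)$ it plants one private witness $q_{i,k}>E$, with $p_i$ of exact order $k$ modulo $q_{i,k}$ and $p_j\equiv1\pmod{q_{i,k}}$ for $j\ne i$. Then $q_{i,k_i}$ divides $F(\mathbf k)$ but not $F(\mathbf l)$ whenever $k_i\nmid l_i$. The underlying point is that an order embedding only needs a separating prime for each failed coordinate divisibility, not a factorization of the whole divisibility relation.
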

\begin{proof}
For
\[
\mathbf k=(k_1,\ldots,k_r)\in P_{\mathbf e}
\]
define
\[
F(\mathbf k)=\prod_{i=1}^r R_{k_i}(p_i).
\]
If $k_i\mid\ell_i$ for every $i$, then $F(\mathbf k)\mid F(\boldsymbol\ell)$. Given an antichain among the distinct values $F(\mathbf k)$, choose one index vector representing each value. No two chosen vectors can be comparable in $P_{\mathbf e}$, so they form an antichain. Therefore
\[
a(p_1^{e_1}\cdots p_r^{e_r})\le\width(P_{\mathbf e}).
\]

It remains to realize exactly the coordinatewise divisibility relations. Put
\[
m=\max_i(e_i+1).
\]
For every pair $(i,k)$ with $2\le k\le e_i+1$, choose a distinct prime $q_{i,k}>m$ satisfying
\[
q_{i,k}\equiv1\pmod k,
\]
and choose $g_{i,k}$ of exact multiplicative order $k$ modulo $q_{i,k}$. For each fixed $i$, prescribe
\[
p_i\equiv g_{i,k}\pmod{q_{i,k}}
\]
for the primes belonging to the $i$-th coordinate, and
\[
p_i\equiv1\pmod{q_{j,k}}
\qquad(j\ne i)
\]
for all primes belonging to the other coordinates. The Chinese remainder theorem gives a reduced residue class, and Dirichlet's theorem supplies infinitely many primes in it. The $p_i$ may therefore be chosen distinct.

For $2\le k\le e_i+1$ and $1\le u\le e_i+1$,
\[
q_{i,k}\mid R_u(p_i)
\quad\Longleftrightarrow\quad
k\mid u,
\]
because $p_i$ has exact order $k$ modulo $q_{i,k}$. On the other hand, if $j\ne i$, then
\[
R_u(p_j)\equiv u\pmod{q_{i,k}},
\]
and $1\le u\le m<q_{i,k}$, so $q_{i,k}$ divides none of the relevant factors from another coordinate.

Thus $q_{i,k}$ is a private witness for divisibility by $k$ in the $i$-th coordinate. If some $k_i$ does not divide $\ell_i$, then $q_{i,k_i}$ divides $F(\mathbf k)$ but not $F(\boldsymbol\ell)$. Consequently
\[
F(\mathbf k)\mid F(\boldsymbol\ell)
\quad\Longleftrightarrow\quad
k_i\mid\ell_i\ \text{for all }i.
\]
Hence the divisibility poset is isomorphic to $P_{\mathbf e}$, proving equality. Since each prescribed residue class contains infinitely many primes, infinitely many choices attain the maximum.
\end{proof}

\begin{corollary}\label{cor:sq-max}
For every $r\ge1$,
\[
\boxed{
\max_{\substack{n\text{ squarefree}\\\omega(n)=r}}a(n)
=\binom r{\lfloor r/2\rfloor}.
}
\]
The maximum is attained infinitely often.
\end{corollary}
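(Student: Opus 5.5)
This corollary follows by specializing Theorem~\ref{thm:fixed-pattern} to the all-ones exponent vector \(\mathbf e=(1,\ldots,1)\) of length \(r\). A positive integer \(n\) is squarefree with \(\omega(n)=r\) exactly when \(n=p_1\cdots p_r\) for distinct primes \(p_1,\ldots,p_r\). So the maximum on the left-hand side is the maximum appearing in Theorem~\ref{thm:fixed-pattern} for this \(\mathbf e\), and it equals \(\width(P_{\mathbf e})\) with
\[
P_{\mathbf e}=D_2\times\cdots\times D_2 \qquad (r\text{ factors}).
\]

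The one step with content is identifying this width. The poset \(D_2=(\{1,2\},\mid)\) is a two-element chain, so \(P_{\mathbf e}\) is isomorphic to the Boolean lattice \(B_r\). The isomorphism sends \(\mathbf k\) to \(\{i: k_i=2\}\), and coordinatewise divisibility corresponds to inclusion of these sets. Sperner's theorem \cite{Sperner} then gives \(\width(B_r)=\binom r{\lfloor r/2\rfloor}\), the bound being attained by the middle layer.

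Theorem~\ref{thm:fixed-pattern} also states that the maximum is attained for infinitely many choices of \(p_1,\ldots,p_r\). Distinct prime tuples, taken as sets, give distinct squarefree \(n\), so the maximum is attained by infinitely many squarefree \(n\). As a consistency check, the upper bound half also follows directly from Theorem~\ref{thm:sq-upper}, so only attainment needs Theorem~\ref{thm:fixed-pattern}.

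There is no real obstacle here. The only point to verify is that the construction in Theorem~\ref{thm:fixed-pattern} is valid when every \(e_i=1\). In that case the witnesses are the primes \(q_{i,2}>2\), which satisfy \(q_{i,2}\equiv1\pmod 2\), together with \(g_{i,2}=-1\). The congruences then read \(p_i\equiv-1\pmod{q_{i,2}}\) and \(p_j\equiv1\pmod{q_{i,2}}\) for \(j\ne i\). Both residue classes are reduced, so Dirichlet's theorem applies and the construction goes through.
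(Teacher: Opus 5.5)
Your proposal is correct and is the paper's own argument: specialize Theorem~\ref{thm:fixed-pattern} to $\mathbf e=(1,\ldots,1)$, identify $D_2^r$ with the Boolean lattice $B_r$, apply Sperner's theorem, and take the infinitude of attaining cases from the theorem. For that last step, the cleanest justification is that each squarefree $n$ arises from at most $r!$ ordered prime tuples, so infinitely many tuples give infinitely many $n$.
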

\begin{proof}
For the squarefree pattern, $P_{\mathbf e}=D_2^r$, which is the Boolean lattice $B_r$. By Sperner's theorem,
\[
\width(B_r)=\binom r{\lfloor r/2\rfloor},
\]
and Theorem~\ref{thm:fixed-pattern} gives the result, including infinitely many attaining choices.
\end{proof}

Combining Theorem~\ref{thm:sq-min} and Corollary~\ref{cor:sq-max} shows that both squarefree extremal values are sharp and occur infinitely often.

\section{The complete squarefree spectrum}\label{sec:spectrum}
For
\[
\mathcal W_r=\{a(n):n\text{ squarefree},\ \omega(n)=r\},
\qquad
b_r=\binom r{\lfloor r/2\rfloor},
\]
we prove \(\mathcal W_r=\{1,\ldots,b_r\}\). The two ingredients are a one-prime extension that preserves width and a weighted construction for the new values between \(b_r\) and \(b_{r+1}\).

\begin{lemma}[Controlled one-prime extension]\label{lem:controlled-extension}
Let $n$ be squarefree, put $N=\sigma(n)$, and let $M\mid N$. Assume that $2\mid M$ whenever $2\mid N$. Then there are infinitely many primes $q>N$ for which
\[
q+1=Mk,
\qquad
\gcd(k,N)=1.
\]
For every such $q$ and all $x,z\in S(n)$,
\[
\boxed{
 x\mid(q+1)z
 \quad\Longleftrightarrow\quad
 \frac{x}{\gcd(x,z)}\mid M.
}
\tag{5.1}
\]
\end{lemma}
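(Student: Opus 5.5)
The plan has two independent parts: produce the primes $q$ with Dirichlet's theorem, then verify the equivalence (5.1) using only the coprimality $\gcd(k,N)=1$ and the fact that every element of $S(n)$ divides $N$.

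\emph{Existence.} We look for $k$ in an arithmetic progression for which $Mk-1$ is prime. Write $N=\prod \ell^{a_\ell}$. For each odd prime $\ell\mid N$ we impose $k\not\equiv 0\pmod \ell$, and for $\ell=2$ (if $2\mid N$) we impose $k$ odd. For the odd primes $\ell\mid N$ with $\ell\nmid M$ we may additionally need $Mk-1\not\equiv 0\pmod\ell$. The cleanest choice is the progression $k\equiv 1\pmod{N}$, i.e.\ $k=1+Nt$. Then $\gcd(k,N)=1$ automatically, and
\[
q=Mk-1=(M-1)+MNt .
\]
We need $\gcd(M-1,MN)=1$. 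Certainly $\gcd(M-1,M)=1$. A prime $\ell$ dividing both $N$ and $M-1$ is a problem. For $\ell=2$ this is excluded by the hypothesis: if $2\mid N$ then $2\mid M$, so $M-1$ is odd. For odd $\ell$ this can happen, so we instead let $k$ range over a residue class $k\equiv c\pmod N$. We choose $c$ prime by prime via CRT, requiring $\ell\nmid c$ and $\ell\nmid Mc-1$; then $q\equiv Mc-1\pmod{MN}$ is a reduced class, since $\gcd(Mc-1,M)=1$ trivially. For $\ell\mid M$ the second condition is automatic. For odd $\ell\nmid M$ we must avoid two residues ($0$ and $M^{-1}$) modulo $\ell\ge3$, which is possible. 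For $\ell=2$ we take $c$ odd, and $Mc-1$ is odd because $M$ is even. Dirichlet's theorem then gives infinitely many primes $q$ in this class, in particular $q>N$. This is exactly where the parity hypothesis is needed, and I expect this step to be the only real subtlety.

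\emph{The equivalence (5.1).} Fix $x,z\in S(n)$. Both divide $N$: each $\sigma(d)$ with $d\mid n$ divides $\sigma(n)$ by multiplicativity in the squarefree case. Put $g=\gcd(x,z)$ and $x'=x/g$. Then $x\mid Mkz$ iff $x'\mid Mk(z/g)$, and since $\gcd(x',z/g)=1$ this holds iff $x'\mid Mk$. Now $x'\mid N$ and $\gcd(k,N)=1$, so $\gcd(x',k)=1$, and therefore $x'\mid Mk$ iff $x'\mid M$. Both directions follow from the same chain of equivalences, which completes the plan.
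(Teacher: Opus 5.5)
Your proposal is correct and follows the paper's proof essentially step for step. You make the same CRT choice of $c \bmod N$ (avoiding $0$ at every $\ell\mid N$, and also $M^{-1}$ when $\ell\nmid M$, with the parity hypothesis covering $\ell=2$), apply Dirichlet's theorem in the reduced class $Mc-1 \bmod MN$, and use the same reduction $x\mid Mkz \iff x'\mid Mkz' \iff x'\mid M$ via $\gcd(x',z')=1$ and $\gcd(x',k)=1$. The abandoned first attempt with $k\equiv 1 \pmod N$ is harmless but can simply be dropped.
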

\begin{proof}
We first choose a residue class $c$ modulo $N$. For every prime $\ell\mid N$ with $\ell\mid M$, require $c\not\equiv0\pmod\ell$. For every prime $\ell\mid N$ with $\ell\nmid M$, the prime $\ell$ is odd by the hypothesis on the factor $2$; choose $c$ modulo $\ell$ avoiding both $0$ and $M^{-1}$. These local conditions can be lifted to the prime-power factors of $N$, and the Chinese remainder theorem gives $c$ such that
\[
\gcd(c,N)=1,
\qquad
\gcd(Mc-1,N)=1.
\]
Since also $Mc-1\equiv-1\pmod M$, we have
\[
\gcd(Mc-1,MN)=1.
\]
Dirichlet's theorem therefore gives infinitely many primes
\[
q\equiv Mc-1\pmod{MN}.
\]
Choosing one with $q>N$, write
\[
q+1=M(c+Nt)=Mk.
\]
Then $\gcd(k,N)=1$.

Because $n$ is squarefree, every element of $S(n)$ divides $N$. Put
\[
g=\gcd(x,z),\qquad x=gx',\qquad z=gz',
\]
so that $\gcd(x',z')=1$. Then
\[
x\mid(q+1)z
\quad\Longleftrightarrow\quad
x'\mid Mkz'.
\]
Now $x'\mid N$, hence $\gcd(x',k)=1$, and $\gcd(x',z')=1$. Thus the last divisibility is equivalent to $x'\mid M$, which is exactly (5.1).
\end{proof}

\begin{corollary}\label{cor:spectrum-preserve}
For every $r\ge1$,
\[
\mathcal W_r\subseteq\mathcal W_{r+1}.
\]
More precisely, if a width occurs for a squarefree integer with $r$ prime factors, then it occurs for infinitely many squarefree integers with $r+1$ prime factors.
\end{corollary}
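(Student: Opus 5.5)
Given squarefree $n$ with $\omega(n)=r$ and $a(n)=w$, we apply Lemma~\ref{lem:controlled-extension} with the choice $M=N=\sigma(n)$. The hypothesis on the factor $2$ holds trivially, since $M=N$. The lemma then supplies infinitely many primes $q>N$ with $q+1=Nk$ and $\gcd(k,N)=1$. Because $q>N$ and every prime factor of $n$ is at most $\sigma(n)=N$, such a $q$ does not divide $n$. Hence $nq$ is squarefree with $\omega(nq)=r+1$, and
\[
S(nq)=S(n)\cup(q+1)S(n).
\]
Distinct admissible $q$ give distinct integers $nq$, so it remains to show $a(nq)=w$ for every such $q$.

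We first describe the order on $S(nq)$, splitting into three cases.

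\emph{Within the bottom copy $S(n)$.} The order is the original one.

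\emph{Within the top copy $(q+1)S(n)$.} Since $(q+1)x\mid(q+1)z$ if and only if $x\mid z$, the top copy is an order-isomorphic copy of $S(n)$.

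\emph{Between the copies.} Every $x\in S(n)$ divides $N=M$, so the right-hand side of (5.1) always holds, and therefore $x\mid(q+1)z$ for all $x,z\in S(n)$. Conversely, $(q+1)z\nmid x$, because $(q+1)z\ge q+1>N\ge x$.

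Thus $S(nq)$ is the ordinal sum of two copies of $S(n)$, with every bottom element below every top element. The two copies are disjoint as sets: every bottom element is at most $N$, while every top element exceeds $N$.

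In an ordinal sum, every antichain lies entirely in one summand, since any bottom element is comparable to any top element. Hence
\[
a(nq)=\max(a(n),a(n))=w.
\]
Taking all admissible $q$ gives infinitely many squarefree integers with $r+1$ prime factors and width $w$, which proves both $\mathcal W_r\subseteq\mathcal W_{r+1}$ and the sharper statement.

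The only delicate step is verifying that the cross-copy relations are exactly the full bottom-below-top relation. This is precisely what (5.1) with $M=N$ delivers, together with the size inequality $q+1>N$. Equivalently, one could bypass the lemma and argue as in Theorem~\ref{thm:sq-min}, choosing Dirichlet primes $q\equiv-1\pmod N$ with $q>N$. Everything else is bookkeeping.
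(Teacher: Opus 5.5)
Your proof is correct and follows the paper's argument: apply Lemma~\ref{lem:controlled-extension} with $M=N$, conclude that $S(nq)$ is the ordinal sum of two copies of $S(n)$, and read off $a(nq)=a(n)$ for infinitely many $q$. You also make explicit points the paper leaves tacit: that $q\nmid n$, that $q+1>N$ rules out top-to-bottom divisibilities, and that the two copies are disjoint. Your remark that primes $q\equiv-1\pmod N$ with $q>N$ would already suffice is also right.
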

\begin{proof}
Apply Lemma~\ref{lem:controlled-extension} with $M=N$. Then every $x,z\in S(n)$ satisfy
\[
x\mid(q+1)z,
\]
so
\[
S(nq)=S(n)\oplus(q+1)S(n)
\]
is the ordinal sum of two copies of $S(n)$. Its width is therefore $a(n)$. The lemma gives infinitely many choices of $q$.
\end{proof}

To realize the missing widths between $b_r$ and $b_{r+1}$, start from an exact Boolean lattice and add one prime whose cross-relations between the two Boolean copies are controlled by a single integer threshold.

Fix $r\ge1$ and put
\[
A=2^r,
\qquad
w_i=A+2^{i-1}
\quad(1\le i\le r).
\]
Choose distinct auxiliary primes $\ell_1,\ldots,\ell_r>3$. By the Chinese remainder theorem and Dirichlet's theorem, we may choose distinct primes $p_1,\ldots,p_r>3$ such that
\[
p_i\equiv 3^{w_i}-1\pmod{3^{w_i+1}},
\tag{5.2}
\]
\[
p_i\equiv-1\pmod{\ell_i},
\qquad
p_i\equiv1\pmod{\ell_j}\quad(j\ne i).
\tag{5.3}
\]
The residue classes are reduced, so infinitely many choices are available. Put
\[
n=p_1\cdots p_r,
\qquad
x_I=\prod_{i\in I}(p_i+1)
\quad(I\subseteq[r]).
\]
Equation (5.3) makes $\ell_i$ a private prime divisor of $p_i+1$. Consequently
\[
x_I\mid x_J
\quad\Longleftrightarrow\quad
I\subseteq J,
\tag{5.4}
\]
so $S(n)$ is exactly a Boolean lattice $B_r$. By (5.2),
\[
v_3(p_i+1)=w_i.
\]
Define
\[
W(I)=v_3(x_I)
=A|I|+\sum_{i\in I}2^{i-1}.
\tag{5.5}
\]
These weights are pairwise distinct integers. Indeed, for fixed $|I|$ the binary sum
$\sum_{i\in I}2^{i-1}$ determines $I$, while different cardinalities lie in disjoint
intervals $[kA,(k+1)A-1]$.
Let
\[
H=W([r])=v_3(\sigma(n)).
\]
For $0\le t\le H$, put
\[
M_t=\frac{\sigma(n)}{3^{H-t}}.
\]
Since the primes $p_i$ are odd, $M_t$ is even. Lemma~\ref{lem:controlled-extension} therefore gives infinitely many primes $q_t>\sigma(n)$ satisfying its conclusion with $M=M_t$.

Let $P_t=S(nq_t)$ as a divisibility poset. It consists of lower and upper copies $x_I$ and $(q_t+1)x_J$ of $B_r$; since $q_t>\sigma(n)$ there are no divisibilities from the upper copy to the lower. By (5.1), only the $3$-adic part can obstruct a lower-to-upper divisibility, and
\[
v_3\!\left(\frac{x_I}{\gcd(x_I,x_J)}\right)
=\max\{W(I)-W(J),0\}.
\]
Hence the cross-relations are exactly
\[
\boxed{
x_I\mid(q_t+1)x_J
\quad\Longleftrightarrow\quad
W(I)-W(J)\le t.}
\tag{5.6}
\]
\begin{lemma}[Width interpolation]\label{lem:width-interpolation}
Let $\alpha_t=\width(P_t)$. Then
\[
\alpha_{t+1}\le\alpha_t\le\alpha_{t+1}+1
\qquad(0\le t<H).
\tag{5.7}
\]
Moreover,
\[
\alpha_0=b_{r+1},
\qquad
\alpha_H=b_r.
\tag{5.8}
\]
Consequently every integer between $b_r$ and $b_{r+1}$ occurs among the values $\alpha_t$.
\end{lemma}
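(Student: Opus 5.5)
The poset $P_t$ lives on the fixed ground set $\{x_I\}\cup\{(q_t+1)x_J\}$, two disjoint copies of $B_r$. By (5.4) and $q_t>\sigma(n)$, the relations inside each copy and the absence of upper-to-lower relations do not depend on $t$. By (5.6), the lower-to-upper relations form the threshold set $\{(I,J):W(I)-W(J)\le t\}$. So I will treat all $P_t$ as orders on one ground set, namely $B_r\sqcup B_r$ with cross-relations $I\to J$ exactly when $W(I)-W(J)\le t$. Raising $t$ only adds comparable pairs, so every antichain of $P_{t+1}$ is an antichain of $P_t$. This gives the left inequality $\alpha_{t+1}\le\alpha_t$ in (5.7).

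For the right inequality, the plan is to use the fact that the $W(I)$ are pairwise distinct integers. Then the pairs $(I,J)$ with $W(I)-W(J)=t+1$ come in bijection with pairs of distinct weights differing by $t+1$, and each lower $I$ has at most one such partner $J$. The new cross-relations in $P_{t+1}$ therefore form a matching $\mathcal M_t$ between the lower and upper copies. (Relations forced transitively are already present: if $I\subseteq I'$ and $W(I')-W(J)\le t+1$, then $W(I)\le W(I')$, so the threshold set is already transitively closed. In the order $P_{t+1}$, the relations beyond those of $P_t$ are exactly $\mathcal M_t$.) Now take a maximum antichain $\mathcal A$ of $P_t$. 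If it contained two matched pairs $\{x_I,(q+1)x_J\}$ and $\{x_{I'},(q+1)x_{J'}\}$, I would look for a contradiction in $P_t$. With $W(I)-W(J)=W(I')-W(J')=t+1$, the four elements are pairwise incomparable in $P_t$, so $W(I)-W(J')>t$ and $W(I')-W(J)>t$. Adding gives $W(I)+W(I')-W(J)-W(J')>2t$, but the left side is $2t+2$, so this alone is consistent. The pairwise-incomparability route is therefore not enough.

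The main obstacle is this step: showing that at most one matched pair lies inside a single antichain. If this fails, the fallback is a different removal. From an antichain of $P_t$ I will delete all lower elements $x_I$ that appear in some matched pair inside $\mathcal A$, and show that at most one such deletion is needed. I would try an ordering argument. Among matched pairs in $\mathcal A$, take $I$ with $W(I)$ maximal. For a second pair $(I',J')$ with $W(I')<W(I)$, we get $W(I')-W(J)<t+1$, hence $W(I')-W(J)\le t$, so $x_{I'}<(q+1)x_J$ in $P_t$, contradicting that $\mathcal A$ is an antichain. Equality $W(I')=W(I)$ forces $I'=I$. So $\mathcal A$ contains at most one $\mathcal M_t$-pair, and deleting one element gives an antichain of $P_{t+1}$ of size at least $\alpha_t-1$.

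For (5.8), at $t=H$ every cross-relation holds because $W(I)-W(J)\le H$ always. So $P_H$ is the ordinal sum of two copies of $B_r$, with width $b_r$ by Sperner's theorem. At $t=0$ the relation $x_I\mid(q+1)x_J$ holds iff $W(I)\le W(J)$. I will show that the width is at least $b_{r+1}$. I identify the lower copy with subsets of $[r+1]$ not containing $r+1$ and the upper copy with subsets containing it. Because of the gap $A$, $W(I)\le W(J)$ holds whenever $|I|\le|J|$ with $I\ne J$ roughly, so I cannot directly embed $B_{r+1}$. Instead I exhibit an antichain: lower sets of size $\lfloor (r+1)/2\rfloor$ together with upper sets of size $\lfloor (r+1)/2\rfloor-1$. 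In $P_0$ these are incomparable, since $|I|>|J|$ forces $W(I)>W(J)$ by the interval separation. Pascal's rule gives size $b_{r+1}$. For the upper bound, the whole poset has $2^{r+1}$ elements and contains every relation of $B_{r+1}$ under this identification, since $I\subseteq J$ implies $W(I)\le W(J)$. An antichain of $P_0$ is therefore an antichain of $B_{r+1}$, and Sperner gives $\alpha_0\le b_{r+1}$. Finally, since $\alpha_t$ decreases from $b_{r+1}$ to $b_r$ in steps of at most one, every intermediate integer occurs.
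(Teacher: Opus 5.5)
Your proof is correct. For (5.8) it is in substance the paper's own argument. Every relation of $B_{r+1}$ holds in $P_0$, which gives $\alpha_0\le b_{r+1}$. Rank $h$ of the lower copy together with rank $h-1$ of the upper copy gives the matching antichain. The order $P_H$ is an ordinal sum.

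For (5.7) you take a different and more direct route. The paper first proves the formula
\[
\alpha_t=\max_{s\in\mathbb Z}\bigl(\width(I_s)+\width(F_{s+t})\bigr),
\]
where $I_s=\{I:W(I)\le s\}$ and $F_s=\{I:W(I)>s\}$. It does this by splitting an antichain at the largest weight $s$ of its upper part. Both inequalities then follow at once, because distinct weights make $F_{s+t+1}$ equal to $F_{s+t}$ with at most one element removed.

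You instead get $\alpha_{t+1}\le\alpha_t$ from the fact that the comparability relation only grows with $t$. You get $\alpha_t\le\alpha_{t+1}+1$ by observing that the relations added in passing to $P_{t+1}$ are exactly the pairs with $W(I)-W(J)=t+1$. These form a matching, and any antichain of $P_t$ contains at most one of its edges.

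Both arguments rest on the distinctness of the integer weights. Yours is shorter and avoids the boundary cases of the max-formula (empty upper or empty lower part). The paper's formula is more informative, since it describes every $\alpha_t$ explicitly through widths of weight sublevel and superlevel sets in $B_r$.

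Two expository fixes are worth making.
\begin{itemize}
\item Your ``false start'' is not actually a dead end. Use the inequalities $W(I)-W(J')\ge t+1$ and $W(I')-W(J)\ge t+1$ separately rather than adding them. Since $t+1=W(I')-W(J')=W(I)-W(J)$, they give $W(I)\ge W(I')\ge W(I)$ at once, which is your ordering argument.
\item The remark that you ``cannot directly embed $B_{r+1}$'' contradicts what you use afterwards. Replace it with the correct statement: $P_0$ contains every relation of $B_{r+1}$, and in general more, so the two posets need not be isomorphic.
\end{itemize}
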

\begin{proof}
For an integer $s$, let
\[
I_s=\{I\subseteq[r]:W(I)\le s\},
\qquad
F_s=\{I\subseteq[r]:W(I)>s\},
\]
with the induced Boolean order. We claim that
\[
\alpha_t
=
\max_{s\in\mathbb Z}
\bigl(\width(I_s)+\width(F_{s+t})\bigr).
\tag{5.9}
\]
Indeed, split an antichain of $P_t$ into its upper and lower parts. If the upper part is nonempty, choose an upper element $x_J$ of largest weight and put $s=W(J)$. Every upper element then belongs to $I_s$. If a lower element $x_I$ had $W(I)\le s+t$, then (5.6) would give $x_I\mid(q_t+1)x_J$, contradicting the antichain property. Hence every lower element belongs to $F_{s+t}$. The two parts therefore have sizes at most $\width(I_s)$ and $\width(F_{s+t})$. If the upper part is empty, take $s<-t$, so that $I_s$ is empty and $F_{s+t}$ is the whole Boolean lattice. If the lower part is empty, take $s\ge H$, so that $I_s$ is the whole Boolean lattice and $F_{s+t}$ is empty. Conversely, choose maximal antichains $A\subseteq I_s$ and $B\subseteq F_{s+t}$, place $A$ in the upper copy and $B$ in the lower copy, and consider $I\in B$, $J\in A$. Then $W(I)>s+t\ge W(J)+t$, so (5.6) excludes every lower-to-upper comparability. Thus $A\cup B$ is an antichain. This proves (5.9).

The weights (5.5) are pairwise distinct integers. Therefore, for fixed $s$, passing from $F_{s+t}$ to $F_{s+t+1}$ deletes at most one element. Deleting one element from a finite poset can lower its width by at most one. Formula (5.9) then gives
\[
\alpha_t-1\le\alpha_{t+1}\le\alpha_t,
\]
which is (5.7).

For $t=H$, condition (5.6) holds for every pair $I,J$, so every lower element lies below every upper element. Thus $P_H$ is the ordinal sum of two copies of $B_r$ and
\[
\alpha_H=b_r.
\]

For $t=0$, identify the lower copy with the layer of $B_{r+1}$ having last coordinate $0$, and the upper copy with the layer having last coordinate $1$. Comparabilities within each layer are the Boolean ones. A cross-comparability in $B_{r+1}$ has the form $(I,0)\le(J,1)$ with $I\subseteq J$; then $W(I)\le W(J)$, so (5.6) gives the same comparability in $P_0$. Thus every comparability of $B_{r+1}$ occurs in $P_0$, and hence
\[
\alpha_0\le b_{r+1}.
\]
Put
\[
h=\left\lfloor\frac{r+1}{2}\right\rfloor.
\]
Every $h$-element subset has weight at least $hA$, whereas every $(h-1)$-element subset has weight at most
\[
(h-1)A+(A-1)=hA-1.
\]
Therefore the $h$-th rank of the lower copy together with the $(h-1)$-st rank of the upper copy is an antichain in $P_0$. Its size is
\[
\binom rh+\binom r{h-1}
=\binom{r+1}{h}
=b_{r+1}.
\]
Thus $\alpha_0=b_{r+1}$. Since the integer sequence $\alpha_t$ decreases from $b_{r+1}$ to $b_r$ in steps of size at most one, it assumes every intermediate integer value.
\end{proof}

\begin{theorem}[Complete squarefree spectrum]\label{thm:squarefree-spectrum}
For every $r\ge1$,
\[
\boxed{
\mathcal W_r
=
\left\{1,2,\ldots,
\binom r{\lfloor r/2\rfloor}
\right\}.
}
\tag{5.10}
\]
Moreover, every value in this interval is attained by infinitely many squarefree integers with exactly $r$ prime factors.
\end{theorem}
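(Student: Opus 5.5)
The plan is to prove (5.10) by induction on $r$, establishing at each stage the stronger statement that every value in $\{1,\ldots,b_r\}$ is attained by infinitely many squarefree $n$ with $\omega(n)=r$.

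\textbf{Upper inclusion.} Theorem~\ref{thm:sq-upper} already gives $\mathcal W_r\subseteq\{1,\ldots,b_r\}$.

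\textbf{Base case.} For $r=1$ we have $b_1=1$. Moreover $a(p)=1$ for every prime $p$, so the base case holds.

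\textbf{Inductive step.} Assume the strengthened claim for $r$ and split the target interval $\{1,\ldots,b_{r+1}\}$ into two ranges.
\begin{enumerate}[label=(\roman*)]
\item Values $1\le w\le b_r$. By the inductive hypothesis $w$ is attained for some squarefree $n$ with $\omega(n)=r$. Corollary~\ref{cor:spectrum-preserve} then produces infinitely many squarefree integers $nq$ with $r+1$ prime factors and $a(nq)=w$.
\item Values $b_r\le w\le b_{r+1}$. Use the threshold construction preceding Lemma~\ref{lem:width-interpolation}, with the same $r$. By that lemma there is some $t\in\{0,\ldots,H\}$ with $\alpha_t=w$. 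Since $P_t=S(nq_t)$, this means $a(nq_t)=w$, and $nq_t$ is squarefree with exactly $r+1$ prime factors.
\end{enumerate}

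\textbf{Distinctness of primes in case (ii).} We need $q_t\ne p_i$ for all $i$, so that $nq_t$ really has $r+1$ distinct prime factors. This holds because $q_t>\sigma(n)>p_i$.

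\textbf{Infinitely many realizations in case (ii).} Fix $n$ and $t$. Lemma~\ref{lem:controlled-extension} supplies infinitely many admissible primes $q_t$. Each of them satisfies (5.1) with $M=M_t$, and hence yields the same relation (5.6), so the poset $P_t$ has the same width for every choice. Distinct $q_t$ give distinct products $nq_t$, so $w$ is attained infinitely often. (Alternatively, one could vary $p_1,\ldots,p_r$, of which there are also infinitely many choices.)

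\textbf{Main obstacle.} The only delicate point is confirming that the width of $P_t$ depends only on $t$, and not on the particular prime $q_t$. This follows because (5.6) is derived purely from (5.1) together with the $3$-adic weights $W(I)$, none of which involve $q_t$ itself. Everything else is bookkeeping with the results already proved.
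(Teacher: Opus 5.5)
Your proposal is correct and follows essentially the same route as the paper: induction on $r$, Corollary~\ref{cor:spectrum-preserve} for the values $1,\ldots,b_r$, the threshold construction with Lemma~\ref{lem:width-interpolation} for $b_r,\ldots,b_{r+1}$, infinitely many choices of $q_t$ for infinitude, and Theorem~\ref{thm:sq-upper} for the reverse inclusion. Your checks that $q_t>\sigma(n)$ keeps $q_t$ distinct from the $p_i$, and that (5.6) does not depend on the particular $q_t$, spell out points the paper leaves implicit.
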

\begin{proof}
We argue by induction on $r$. The assertion is immediate for $r=1$. Assume it for $r$. Corollary~\ref{cor:spectrum-preserve} preserves every value $1,\ldots,b_r$ when one new prime factor is added, with infinitely many realizations. Lemma~\ref{lem:width-interpolation} and the construction preceding it realize every value
\[
b_r,b_r+1,\ldots,b_{r+1}
\]
with $r+1$ prime factors, again with infinitely many realizations because the final prime $q_t$ can be chosen in infinitely many ways. Thus every value from $1$ to $b_{r+1}$ occurs. The upper bound in Theorem~\ref{thm:sq-upper} shows that no larger value is possible.
\end{proof}

\clearpage
\section{Optimal representations of divisibility posets}
By Theorem~\ref{thm:fixed-pattern}, repeated exponent blocks lead to widths of powers \(P^r\). Engel's limit theory expresses their asymptotics through the minimum representation variance \(V(P)\); we therefore determine \(V(D_m)\).

For a finite poset $P$ with uniform weight, a \emph{representation} is a function
\[
x:P\to\mathbb R
\]
such that
\[
x(q)-x(p)\ge1
\qquad(p<q).
\]
Let $V(P)$ be the minimum of the uniform variance $\Var(x)$ over all representations $x$, and put
\[
V_m=V(D_m).
\]
Engel's asymptotic product theorem states that for every finite poset $P$ which is not an antichain,
\[
\width(P^r)
\sim
\frac{|P|^r}{\sqrt{2\pi r V(P)}}
\qquad(r\to\infty)
\]
under uniform weights \cite{Engel1986,EngelBook}; see in particular \cite[Theorem~7.2.1]{EngelBook}. Our task is therefore to determine $V(D_m)$.

\subsection{A Ferrers lemma}
For graded posets, filter-average conditions are closely related to rank compression; see Bezrukov and Engel \cite{BezrukovEngel}. Since our Ferrers truncations need not be graded, we prove directly the filter-average inequality needed for optimality.

For integers
\[
w_0>w_1>\cdots>w_{H-1}>0,
\qquad w_H=0,
\]
let
\[
P(w)=\{(a,b):0\le b<H,\ 0\le a<w_b\}
\]
with the coordinatewise order, and define the natural height
\[
\rho(a,b)=a+b.
\]

\begin{lemma}[Ferrers lemma]\label{lem:ferrers}
Assume
\[
w_b-w_{b+1}\in\{1,2\}
\qquad(0\le b<H).
\]
Then $\rho$ is an optimal representation of $P(w)$.
\end{lemma}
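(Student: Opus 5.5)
The plan is to reduce optimality of \(\rho\) to a filter-average inequality and then verify that inequality using the shape of \(P(w)\).

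\emph{Step 1 (reduction to filters).} Every cover relation of \(P(w)\) has the form \((a,b)\lessdot(a+1,b)\) or \((a,b)\lessdot(a,b+1)\), and \(\rho\) increases by exactly \(1\) along each. Hence every representation \(x\) can be written as \(x=\rho+\delta\) with \(\delta\) weakly order-preserving, and conversely. Write \(\mu\) for the uniform mean of \(\rho\). Then
\[
\Var(x)=\Var(\rho)+\frac{2}{|P|}\sum_{p}(\rho(p)-\mu)\,\delta(p)+\Var(\delta).
\]
An order-preserving \(\delta\) is a constant plus a nonnegative combination of indicator functions \(\mathbf 1_F\) of filters \(F\). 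Constants do not contribute to the middle term. It therefore suffices to prove
\[
\sum_{p\in F}\bigl(\rho(p)-\mu\bigr)\ge0\qquad\text{for every filter }F\subseteq P(w),
\]
that is, every filter has average height at least the global mean. Given this, \(\Var(x)\ge\Var(\rho)\) for every representation \(x\), so \(\rho\) is optimal. This is the filter-average criterion mentioned before the lemma, proved directly by convexity, so gradedness is never used.

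\emph{Step 2 (chain covering).} To prove the filter inequality I would cover \(P(w)\), possibly fractionally with nonnegative weights, by chains \(C\) whose weighted average height equals \(\mu\) in every chain, and which are saturated or at least convex in \(P(w)\). A filter meets each such chain in an upper segment. The average height of an upper segment of a chain is at least the chain average \(\mu\). Summing over the chains with their weights then gives the inequality. In the Boolean or rectangular case this is the usual symmetric chain decomposition. Here I would build the chains by induction on \(H\), peeling off the bottom row \(b=0\), which has length \(w_0\), together with a suitable boundary path. The goal is that the removed material has mean height \(\mu\) and the remaining shape is again a Ferrers diagram satisfying \(w_b-w_{b+1}\in\{1,2\}\). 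A consistent variant of the same induction removes the first column instead.

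\emph{Step 3 (the step condition; main obstacle).} The hard part is producing this balanced decomposition, and this is exactly where the hypothesis \(w_b-w_{b+1}\in\{1,2\}\) must enter. If some row dropped by \(3\) or more, a long horizontal tail would sit at low height. The filter generated by the elements of the top rows would then have average below \(\mu\), and the lemma would fail. I would first try a direct computation instead of explicit chains. A filter \(F\) is determined by a staircase boundary, so I would compare \(F\) with the filter obtained by adding or removing one corner element, show that the average moves monotonically toward a canonical extremal filter (a row-block \(\{b\ge b_0\}\) or a column-block), and then verify the inequality on these extremal filters by a short sum using \(w_b-w_{b+1}\le2\). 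If the exchange step proves messy, I would fall back on constructing the fractional chain weights explicitly row by row, using steps of size \(1\) or \(2\) to pair each dropped cell with a cell of complementary height.
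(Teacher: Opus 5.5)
Your Step 1 is correct and matches how the paper finishes. Every representation is $\rho+\delta$ with $\delta$ order-preserving, and $\delta$ is a constant plus a nonnegative combination of filter indicators. So everything reduces to $\sum_{p\in F}(\rho(p)-\mu)\ge0$ for every filter $F$.

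That filter inequality is the entire content of the lemma, and Steps 2--3 do not prove it. They list strategies without carrying any of them out. The one you describe most concretely cannot work as stated. For $w=(4,3,1)$ (the diagram $Q_{12}$) one has $|P(w)|=8$ and $\mu=7/4$. A set of cells with mean height exactly $\mu$ must therefore have a multiple of $4$ cells. The only proper such set containing the bottom row is the bottom row itself, whose mean is $3/2$. So no integral ``bottom row plus boundary path'' can be peeled off leaving a Ferrers diagram. A fractional version would no longer be an induction on shapes.

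The corner-exchange alternative is also unsupported. You give no reason why single-corner moves can be chosen to change the average monotonically, nor why the extremal filters should be single row- or column-blocks.

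The missing idea is to work with the additive functional rather than averages, and use convexity to reduce to a small family of extremal sets.

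\begin{enumerate}
\item Describe an order ideal by its row lengths $w_b\ge x_b\ge x_{b+1}\ge0$. The total of $\rho-\mu$ over the ideal is $\Phi(x)=\sum_b\bigl(bx_b+x_b(x_b-1)/2-\mu x_b\bigr)$. This is convex on the polytope of relaxed row lengths, so it suffices to check $\Phi\le0$ at vertices.
\item At a vertex, each maximal constant block $x_s=\cdots=x_t$ equals $0$ or $w_t$. Hence the complementary filter is a disjoint union of wedges $W_{s,t}=\{(a,b):s\le b<t,\ w_t\le a<w_b\}$.
\item Each wedge has mean height at least $\mu$, for three reasons:
\begin{enumerate}
\item The row averages $b+(w_t+w_b-1)/2$ of $W_{s,t}$ are nondecreasing, because drops are at most $2$.
\item Adjoining the rectangle $\{(a,b):w_{t+1}\le a<w_t,\ s\le b\le t\}$ gives $W_{s,t+1}$ and lowers the mean. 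The smallest row average of $W_{s,t}$ exceeds the rectangle's mean by $(s+w_s-w_{t+1}-t)/2\ge1/2$, because drops are at least $1$. Iterating, the mean of $W_{s,t}$ is at least that of the full-row block $W_{s,H}$.
\item $W_{s,H}$ has mean at least $\mu$, because the full-row averages $b+(w_b-1)/2$ are nondecreasing.
\end{enumerate}
\end{enumerate}

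This is where both halves of the hypothesis $w_b-w_{b+1}\in\{1,2\}$ are actually used.
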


It suffices to show that every filter has average height at least the global mean; then every monotone perturbation of \(\rho\) has nonnegative covariance with \(\rho\).

\begin{proof}
Let $\mu$ be the average of $\rho$ on $P(w)$. We first prove that every order ideal has average height at most $\mu$.

An ideal is described by row lengths
\[
w_b\ge x_b\ge x_{b+1}\ge0,
\qquad x_H:=0.
\]
Relax $x_0,\ldots,x_{H-1}$ to real variables and let $K$ be the resulting compact polytope. Define
\[
\Phi(x)=\sum_{b=0}^{H-1}
\left(
 bx_b+\frac{x_b(x_b-1)}2-\mu x_b
\right).
\]
For integral $x$, this is the total value of $\rho-\mu$ on the corresponding ideal. The function $\Phi$ is convex, so it is enough to prove $\Phi\le0$ at the vertices of $K$.

At a vertex, every maximal constant block
\[
x_s=\cdots=x_t=c
\]
is pinned either at $c=0$ or at $c=w_t$. Indeed, if $0<c<w_t$, maximality leaves positive slack in the neighboring monotonicity constraints (with the evident boundary modifications), and the upper bounds are also slack because $w_b\ge w_t>c$ on the block. For sufficiently small $\varepsilon>0$, shifting the whole block by $\pm\varepsilon$ gives two feasible points with midpoint $x$, contradicting extremality.

A positive constant block $x_s=\cdots=x_t=w_t$ contributes to the complement precisely the cells in rows $s,\ldots,t-1$ lying to the right of column $w_t-1$; a final zero block contributes the remaining full rows. Thus the complement of a vertex ideal is a disjoint union of wedges
\[
W_{s,t}
=
\{(a,b):s\le b<t,\ w_t\le a<w_b\},
\qquad 0\le s<t\le H.
\]
We show that every such wedge has average height at least $\mu$.

Fix $s$. Inside $W_{s,t}$, the average height of row $b$ is
\[
b+\frac{w_t+w_b-1}{2},
\qquad s\le b<t.
\]
These row averages are nondecreasing in $b$, because $w_b-w_{b+1}\le2$. For $t<H$,
\[
W_{s,t+1}=W_{s,t}\,\dot\cup\,R_{s,t},
\]
where
\[
R_{s,t}
=
\{(a,b):w_{t+1}\le a<w_t,\ s\le b\le t\}.
\]
The smallest row average in $W_{s,t}$ exceeds the average height of $R_{s,t}$ by
\[
\frac{s+w_s-w_{t+1}-t}{2}
\ge\frac12,
\]
because every row-length drop is at least $1$, so $w_s-w_{t+1}\ge t-s+1$. Hence adjoining $R_{s,t}$ can only decrease the average, and therefore
\[
\operatorname{av}(W_{s,t})
\ge
\operatorname{av}(W_{s,H}).
\]
Now $W_{s,H}$ is simply the union of the full rows $s,\ldots,H-1$. The average height of the full $b$-th row is
\[
b+\frac{w_b-1}{2},
\]
and these row averages are again nondecreasing. Removing the initial rows can therefore only increase the overall average, so
\[
\operatorname{av}(W_{s,H})\ge\mu.
\]
Thus every wedge has average height at least $\mu$. The complement of every vertex ideal has average at least $\mu$, hence every vertex ideal has average at most $\mu$. Since $\Phi$ is convex, the same holds for every ideal.

Equivalently, every nonempty filter has average $\rho$ at least $\mu$. Let $x$ be any representation and write
\[
x=\rho+y.
\]
The set $P(w)$ is an order ideal in $\N_0^2$, so every cover changes exactly one coordinate by $1$. Hence $\rho$ increases by exactly $1$ along every cover relation, and $y$ is nondecreasing. Every nondecreasing function on a finite poset is a constant plus a nonnegative linear combination of indicator functions of filters: list its distinct values $c_0<\cdots<c_m$ and write
\[
y=c_0+\sum_{j=1}^m(c_j-c_{j-1})\,\mathbf 1_{\{y\ge c_j\}},
\]
where each upper level set $\{y\ge c_j\}$ is a filter. It follows from the filter inequality that
\[
\Cov(\rho,y)\ge0.
\]
Therefore
\[
\Var(x)
=
\Var(\rho)+2\Cov(\rho,y)+\Var(y)
\ge
\Var(\rho).
\]
Thus $\rho$ is optimal.
\end{proof}

\subsection{The logarithmic Ferrers diagrams}
For $T\ge1$, define
\[
Q_T=\{(a,b)\in\N_0^2:2^a3^b\le T\}.
\]
Put
\[
N(T)=|Q_T|,
\qquad
\mu(T)=\frac1{N(T)}\sum_{(a,b)\in Q_T}(a+b),
\]
and
\[
v(T)=\frac1{N(T)}
\sum_{(a,b)\in Q_T}(a+b-\mu(T))^2.
\]

Figure~\ref{fig:q12} shows the shape to keep in mind.

\begin{figure}[ht]
\centering
\begin{tikzpicture}[x=1.35cm,y=1.05cm,
    pt/.style={circle,draw,minimum size=7mm,inner sep=0pt,font=\small}]
  \node[pt] at (0,0) {1};
  \node[pt] at (1,0) {2};
  \node[pt] at (2,0) {4};
  \node[pt] at (3,0) {8};
  \node[pt] at (0,1) {3};
  \node[pt] at (1,1) {6};
  \node[pt] at (2,1) {12};
  \node[pt] at (0,2) {9};

  \draw[->] (-0.55,-0.55) -- (3.65,-0.55) node[right] {\(a\)};
  \draw[->] (-0.55,-0.55) -- (-0.55,2.55) node[above] {\(b\)};

  \draw (0.35,0)--(0.65,0);
  \draw (1.35,0)--(1.65,0);
  \draw (2.35,0)--(2.65,0);
  \draw (0,0.35)--(0,0.65);
  \draw (1,0.35)--(1,0.65);
  \draw (2,0.35)--(2,0.65);
  \draw (0,1.35)--(0,1.65);
\end{tikzpicture}
\caption{The diagram \(Q_{12}\). The label at \((a,b)\) is \(2^a3^b\). Moving right multiplies by \(2\), moving up multiplies by \(3\), and the natural height is \(a+b\). The row lengths are \(4,3,1\).}
\label{fig:q12}
\end{figure}

The nonempty rows of $Q_T$ have lengths
\[
w_b=1+\left\lfloor\log_2\frac{T}{3^b}\right\rfloor.
\]
Since $1<\log_2 3<2$, successive nonempty row lengths differ by $1$ or $2$; the last nonempty row has length $1$ or $2$. Lemma~\ref{lem:ferrers} therefore applies to every $Q_T$.

We need one quantitative consequence.

\begin{lemma}[Scaling lemma]\label{lem:mu-scale}
For every integer $T\ge1$,
\[
\boxed{
\mu(2T)-\mu(T)\ge\frac12.
}
\]
Consequently,
\[
\boxed{
\mu(4T)-\mu(T)\ge1.
}
\]
\end{lemma}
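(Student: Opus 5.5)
The plan is to compare $Q_{2T}$ with $Q_T$ through the multiplication-by-$2$ map, and to reduce the inequality to a comparison between a weighted and an unweighted average of row averages. That comparison is settled by Chebyshev's sum inequality, using the monotonicity of row averages already observed in the proof of Lemma~\ref{lem:ferrers}. The second inequality then follows by applying the first to $T$ and to $2T$ and adding.

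\emph{Step 1: decompose $Q_{2T}$.} Let $c$ be the number of nonempty rows of $Q_{2T}$, i.e.\ the number of $b\ge0$ with $3^b\le 2T$. Write $w_b=w_b(T)$ for the row lengths of $Q_T$, with $w_b=0$ if the row is empty. Then row $b$ of $Q_{2T}$ has length exactly $w_b+1$ for $b<c$. Indeed, $\lfloor\log_2(2T/3^b)\rfloor=1+\lfloor\log_2(T/3^b)\rfloor$ when $3^b\le T$, and when $T<3^b\le 2T$ the row has length $1$ in $Q_{2T}$ and $0$ in $Q_T$. Hence
\[
Q_{2T}=Q_T\,\dot\cup\,\{(w_b,b):0\le b<c\},
\]
and the new cell in row $b$ has height $b+w_b$. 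Only the top row $b=c-1$ can be empty in $Q_T$.

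\emph{Step 2: reduce to an inequality between averages.} Put $N=N(T)$ and $\mu=\mu(T)$. The inequality $\mu(2T)\ge\mu+\tfrac12$ is equivalent to
\[
N\mu+\sum_{b<c}(b+w_b)\ \ge\ (N+c)\Bigl(\mu+\tfrac12\Bigr).
\]
Since $\sum_{b<c}w_b=N$, this is the same as
\[
\sum_{b<c}\Bigl(b+\frac{w_b}{2}\Bigr)\ \ge\ c\Bigl(\mu+\tfrac12\Bigr).
\]
Now introduce the row averages $r_b=b+\frac{w_b-1}{2}$, which are the true row averages of $\rho$ when $w_b\ge1$. The last display says exactly that the \emph{unweighted} mean of $r_0,\dots,r_{c-1}$ is at least $\mu$. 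On the other hand, $\mu=\frac1N\sum_b w_b r_b$ is the $w$-weighted mean of the same numbers.

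\emph{Step 3: apply Chebyshev.} The weights $w_b$ are nonincreasing in $b$. The row averages $r_b$ are nondecreasing over the nonempty rows of $Q_T$, as in the proof of Lemma~\ref{lem:ferrers}, because each row-length drop is at most $2$. The one point needing care is the possibly empty top row $b=c-1$, where $r_{c-1}=c-\tfrac32$. In that case $3^{c-1}>T$, so the last nonempty row $b=c-2$ of $Q_T$ has length $1$ or $2$. This gives $r_{c-2}\le c-2+\tfrac12=r_{c-1}$, so monotonicity persists across all $c$ rows. Chebyshev's sum inequality for a nonincreasing weight sequence against a nondecreasing sequence then yields
\[
\frac1{\sum_{b<c} w_b}\sum_{b<c}w_br_b\ \le\ \frac1c\sum_{b<c}r_b .
\]
The left side is $\mu$, which is exactly what Step 2 requires. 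I expect this step, specifically checking that the boundary row does not break monotonicity, to be the only delicate point; the rest is bookkeeping.

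Finally, $\mu(4T)-\mu(T)=\bigl(\mu(4T)-\mu(2T)\bigr)+\bigl(\mu(2T)-\mu(T)\bigr)\ge\tfrac12+\tfrac12=1$.
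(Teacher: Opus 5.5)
Your proof is correct and follows essentially the same route as the paper. The paper's sequence $c_b=w_b+2b$ is just $2r_b+1$, and it applies the same Chebyshev inequality between the nonincreasing row lengths and the nondecreasing row averages, including the possibly empty top row, where you are more explicit than the paper. The only difference is bookkeeping: the paper writes $Q_{2T}$ as a unit translate of $Q_T$ plus the first column and passes through the intermediate bound (6.1), whereas you adjoin the right-end cells $(w_b,b)$ directly, which gives the same total height.
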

\begin{proof}
Let $h$ be the number of points of the first column of $Q_{2T}$, equivalently
\[
h=\#\{b\ge0:3^b\le2T\}.
\]
For $0\le b<h$, let $w_b$ be the number of $a\ge0$ such that $2^a3^b\le T$; the last $w_b$ is allowed to be $0$. Put
\[
N=N(T)=\sum_{b=0}^{h-1}w_b
\]
and
\[
c_b=w_b+2b.
\]
The sequence $(w_b)$ is nonincreasing, while
\[
c_{b+1}-c_b
=2-(w_b-w_{b+1})\in\{0,1\}
\]
whenever both sides are defined. Hence $(c_b)$ is nondecreasing. Since $(w_b)$ and $(c_b)$ are oppositely ordered,
\[
h\sum_b w_b c_b-N\sum_b c_b
=
\sum_{0\le i<j<h}(w_i-w_j)(c_i-c_j)
\le0.
\]
Therefore
\[
\frac{\sum_b w_b c_b}{N}
\le
\frac1h\sum_b c_b.
\]
Now
\[
2\mu(T)+1
=
\frac{\sum_b w_b c_b}{N},
\]
whereas
\[
\sum_b c_b
=N+h(h-1).
\]
Therefore
\[
\mu(T)
\le
\frac12\left(\frac Nh+h-2\right).
\tag{6.1}
\]

The set $Q_{2T}$ is the disjoint union of the translate
\[
(a,b)\mapsto(a+1,b)
\]
of $Q_T$ and the first column
\[
\{(0,b):0\le b<h\}.
\]
Thus
\[
N(2T)=N+h
\]
and
\[
\mu(2T)-\mu(T)
=
\frac{N+h\bigl((h-1)/2-\mu(T)\bigr)}{N+h}.
\]
By (6.1),
\[
\frac{h-1}{2}-\mu(T)
\ge
\frac{h-N}{2h}.
\]
The numerator above is therefore at least
\[
N+\frac{h-N}{2}
=\frac{N+h}{2},
\]
which proves
\[
\mu(2T)-\mu(T)\ge\frac12.
\]
Applying this twice gives the second assertion.
\end{proof}

\begin{lemma}\label{lem:mu-monotone}
The function $T\mapsto\mu(T)$ is nondecreasing on the positive integers.
\end{lemma}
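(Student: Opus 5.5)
It suffices to compare $\mu(T)$ with $\mu(T+1)$ for every integer $T\ge1$. Passing from $T$ to $T+1$, the diagram $Q_{T+1}$ equals $Q_T$ unless $T+1=2^a3^b$ for some pair $(a,b)$. In that case $Q_{T+1}=Q_T\cup\{(a,b)\}$, and the added point is unique by unique factorization. Since $\mu(T+1)$ is then a weighted average of $\mu(T)$ and the new height $a+b$, I will aim to prove
\[
a+b\ge\mu(T)\qquad\text{whenever }2^a3^b=T+1 .
\]

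The key observation is that the new point $(a,b)$ is a maximal element of $Q_{T+1}$. It therefore forms a singleton filter in $Q_{T+1}$, which is itself a Ferrers diagram satisfying the hypotheses of Lemma~\ref{lem:ferrers}. The proof of that lemma showed that every nonempty filter has average height at least the global mean. Applied to $\{(a,b)\}$, this gives
\[
a+b\ge\mu(T+1).
\]
Writing
\[
\mu(T+1)=\frac{N(T)\mu(T)+(a+b)}{N(T)+1},
\]
the inequality $a+b\ge\mu(T+1)$ is equivalent to $a+b\ge\mu(T)$. Consequently $\mu(T+1)\ge\mu(T)$.

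The only care needed is to confirm that the filter inequality was really established for every filter of every $Q_T$. The proof of Lemma~\ref{lem:ferrers} states it for general $P(w)$ with drops in $\{1,2\}$, and the preceding paragraph noted that each $Q_T$ is of this form. I also need to confirm that $(a,b)$ is maximal in $Q_{T+1}$. This holds because $(a+1,b)$ and $(a,b+1)$ correspond to $2(T+1)$ and $3(T+1)$, both of which exceed $T+1$.

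As a fallback, one can argue directly instead. The ideal $Q_T$ of $Q_{T+1}$ has average height at most $\mu(T+1)$ by the ideal form of the lemma, and this is again equivalent to the claim. Alternatively, one could use the bound (6.1) together with an inequality comparing $a+b$ with $N/h+h$; I expect this to be messier and would use it only if the filter route had a gap.
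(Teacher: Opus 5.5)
Your proof is correct and is essentially the paper's argument. The paper notes that $Q_S$ is an order ideal of $Q_T$ for $S\le T$ and applies the ideal-average inequality from the proof of Lemma~\ref{lem:ferrers}, which is the complementary form of your singleton-filter inequality; your fallback is exactly this argument, and your stepwise reduction to $T\to T+1$ with one new maximal point is a special case of it.
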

\begin{proof}
If $S\le T$, then $Q_S$ is an order ideal of $Q_T$. By Lemma~\ref{lem:ferrers}, every ideal of $Q_T$ has average height at most the average height of $Q_T$. Hence $\mu(S)\le\mu(T)$.
\end{proof}

\subsection{Critical dilation}
The scaling lemma already gives the precise dilation needed for the prime-core argument.

\begin{corollary}[Critical dilation]\label{cor:critical-dilation}
The least positive integer $L$ such that
\[
\mu(LT)-\mu(T)\ge1
\qquad(T\ge1)
\]
is
\[
\boxed{L=4.}
\]
\end{corollary}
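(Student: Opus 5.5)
The plan is to prove two things: that $L=4$ has the stated property, and that no smaller positive integer does. The first is already done: the second boxed inequality of Lemma~\ref{lem:mu-scale} states exactly that $\mu(4T)-\mu(T)\ge1$ for every integer $T\ge1$. As a side remark, Lemma~\ref{lem:mu-monotone} then gives $\mu(LT)\ge\mu(4T)$ for every $L\ge4$, so the property holds for all $L\ge4$. This is not needed for minimality, but it shows that $4$ is a genuine threshold.

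For minimality it suffices to give, for each $L\in\{1,2,3\}$, one $T$ with $\mu(LT)-\mu(T)<1$. The smallest possible $T$ should already work, so I will compute directly from the definitions at $T=1$.
\begin{itemize}
\item $Q_1=\{(0,0)\}$, so $\mu(1)=0$.
\item $Q_2=\{(0,0),(1,0)\}$, so $\mu(2)=\tfrac12$.
\item $Q_3=\{(0,0),(1,0),(0,1)\}$, so $\mu(3)=\tfrac23$.
\end{itemize}

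Hence the three differences are:
\begin{itemize}
\item $L=1$: $\mu(T)-\mu(T)=0$ for every $T$.
\item $L=2$: $\mu(2)-\mu(1)=\tfrac12$.
\item $L=3$: $\mu(3)-\mu(1)=\tfrac23$.
\end{itemize}
All three are strictly less than $1$, so none of $L=1,2,3$ satisfies the inequality for all $T\ge1$. Therefore the least admissible $L$ is $4$.

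There is no real obstacle; the only care needed is checking these small diagrams against the definition $2^a3^b\le T$. It is also worth noting that the failure at $L=3$ is not just an artifact of $T=1$. Multiplying by $3$ adds essentially one new row rather than a full unit of height, which is the structural sense in which $3<4$.
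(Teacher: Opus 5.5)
Your proposal is correct and follows the paper's proof essentially verbatim: the property for $L=4$ is the second inequality of Lemma~\ref{lem:mu-scale}, and minimality comes from $\mu(1)=0$, $\mu(2)=\tfrac12$, $\mu(3)=\tfrac23$ at $T=1$. The side remarks are not needed for the argument, and the claim about $L=3$ failing beyond $T=1$ is stated without proof.
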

\begin{proof}
Lemma~\ref{lem:mu-scale}, applied twice, gives
\[
\mu(4T)-\mu(T)\ge1
\]
for every $T$. To see that no smaller integer works, take $T=1$. For $L=1,2,3$, the sets $Q_L$ have mean heights $0$, $1/2$, and $2/3$, respectively, all strictly below $1=\mu(1)+1$.
\end{proof}

Thus the unit-gain scale for the $2,3$-smooth truncations is exactly $4$. Since the next prime is $5$, this is the numerical separation that makes all prime directions at least $5$ redundant in the optimal representation.

\subsection{Prime-core reduction}\label{subsec:why23}
For a finite set \(\mathcal P\) of primes, let \(D_m^{(\mathcal P)}\) be the poset on \(\{1,\ldots,m\}\) in which \(a\preceq b\) when \(a\mid b\) and every prime divisor of \(b/a\) belongs to \(\mathcal P\). Also put
\[
S_{\mathcal P}(T)=\{n\le T:\text{ every prime divisor of }n\text{ belongs to }\mathcal P\},
\]
ordered by divisibility, let \(\rho(n)=\Omega(n)\), and write \(\mu_{\mathcal P}(T)\) for the average of \(\rho\) on \(S_{\mathcal P}(T)\).

The mechanism behind the \(2,3\)-reduction is the following general principle.

\begin{theorem}[Prime-core reduction principle]\label{thm:core-reduction}
Suppose that
\begin{enumerate}[label=\textup{(\roman*)}]
\item \(\rho\) is an optimal representation of \(S_{\mathcal P}(T)\) for every \(T\ge1\);
\item \(\mu_{\mathcal P}(T)\) is nondecreasing;
\item for some integer \(L\ge2\),
\[
\mu_{\mathcal P}(LT)-\mu_{\mathcal P}(T)\ge1
\qquad(T\ge1);
\]
\item every prime outside \(\mathcal P\) is at least \(L\).
\end{enumerate}
Then
\[
V(D_m)=V(D_m^{(\mathcal P)})
\qquad(m\ge1).
\]
More precisely, write uniquely \(n=us\), where \(s\) is \(\mathcal P\)-smooth and \(u\) has no prime divisor in \(\mathcal P\). Then
\[
x_m(n)=\Omega(s)-\mu_{\mathcal P}\!\left(\left\lfloor\frac m u\right\rfloor\right)
\]
is an optimal representation of \(D_m\).
\end{theorem}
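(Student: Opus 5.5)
The plan is to sandwich \(V(D_m)\) between two copies of the same quantity. First, the order \(D_m^{(\mathcal P)}\) has fewer relations than \(D_m\). Second, the explicit function \(x_m\) is simultaneously optimal for \(D_m^{(\mathcal P)}\) and a valid representation of \(D_m\).

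\textbf{Step 1: the order \(D_m^{(\mathcal P)}\).} Write each \(n\le m\) as \(n=us\) as in the statement. In \(D_m^{(\mathcal P)}\) two elements are comparable only if they have the same \(u\). So \(D_m^{(\mathcal P)}\) is the disjoint union of the fibers
\[
F_u=\{us:\ s\ \mathcal P\text{-smooth},\ s\le m/u\}.
\]
The map \(s\mapsto us\) is an order isomorphism from \(S_{\mathcal P}(\lfloor m/u\rfloor)\) onto \(F_u\), and under it \(\Omega(s)\) corresponds to \(\rho\).

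For any representation \(y\) of \(D_m^{(\mathcal P)}\), the law of total variance gives
\[
\Var(y)=\frac1m\sum_u |F_u|\,\Var_{F_u}(y)+(\text{variance of the fiber means})\ge\frac1m\sum_u|F_u|\,V(F_u).
\]
This holds because the restriction of \(y\) to each fiber is a representation of that fiber.

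Now consider \(x_m\). By hypothesis (i), on each fiber \(x_m\) is the optimal representation \(\Omega(s)\) shifted by a constant, so it attains \(V(F_u)\) there. The subtracted constant is exactly the fiber mean \(\mu_{\mathcal P}(\lfloor m/u\rfloor)\), so every fiber mean of \(x_m\) is \(0\) and the between-fiber term vanishes. Hence \(x_m\) attains the lower bound, and
\[
V(D_m^{(\mathcal P)})=\Var(x_m).
\]

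\textbf{Step 2: the comparison \(V(D_m)\ge V(D_m^{(\mathcal P)})\).} Every relation of \(D_m^{(\mathcal P)}\) is also a relation of \(D_m\). Therefore every representation of \(D_m\) is a representation of \(D_m^{(\mathcal P)}\), which gives the inequality.

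\textbf{Step 3: \(x_m\) is a representation of \(D_m\).} This is the main step, and it finishes the proof: it gives \(V(D_m)\le\Var(x_m)=V(D_m^{(\mathcal P)})\), and \(x_m\) is then optimal for \(D_m\).

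Suppose \(a<b\) in \(D_m\). Choose a maximal chain of divisors \(a=n_0\mid n_1\mid\cdots\mid n_k=b\), each step multiplying by one prime. All these intermediate divisors are at most \(b\le m\). Summing the gains along the chain, it suffices to prove \(x_m(np)-x_m(n)\ge1\) whenever \(np\le m\) with \(p\) prime. There are two cases.

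\emph{Case \(p\in\mathcal P\).} Then \(u\) is unchanged and \(\Omega(s)\) increases by \(1\), so the gain is exactly \(1\).

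\emph{Case \(p\notin\mathcal P\).} Then \(s\) is unchanged and \(u\) becomes \(up\). The gain is
\[
\mu_{\mathcal P}(\lfloor m/u\rfloor)-\mu_{\mathcal P}(\lfloor m/(up)\rfloor).
\]
Put \(T=\lfloor m/(up)\rfloor\). We have \(T\ge s\ge1\) because \(ups\le m\). Also \(\lfloor m/u\rfloor\ge pT\), since \(T=\lfloor\lfloor m/u\rfloor/p\rfloor\). By (iv), \(p\ge L\), so \(\lfloor m/u\rfloor\ge LT\). Then (ii) and (iii) give
\[
\mu_{\mathcal P}(\lfloor m/u\rfloor)\ge\mu_{\mathcal P}(LT)\ge\mu_{\mathcal P}(T)+1,
\]
so the gain is at least \(1\).

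The only delicate points are the floor identity \(\lfloor\lfloor m/u\rfloor/p\rfloor=\lfloor m/(up)\rfloor\) and checking \(T\ge1\), so that (iii) applies. I expect no other obstacle; the real difficulty lies in verifying hypotheses (i)–(iii) for \(\mathcal P=\{2,3\}\), which is done in the preceding lemmas.
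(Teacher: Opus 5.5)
Your proposal is correct and follows essentially the same route as the paper: split $D_m^{(\mathcal P)}$ into the fibers indexed by $u$, and center $\rho$ on each fiber (your law-of-total-variance argument spells out the paper's remark that disconnected components may be translated independently). You then check feasibility on $D_m$ via $\mu_{\mathcal P}(\lfloor m/u\rfloor)\ge\mu_{\mathcal P}(LT)\ge\mu_{\mathcal P}(T)+1$, exactly as the paper does. The only cosmetic difference is that you reduce to single prime steps $n\mapsto np$, whereas the paper treats an arbitrary pair $n\mid n'$ at once by writing $n'/n=vw$ and splitting into the cases $w>1$ and $w=1$.
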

\begin{proof}
For fixed \(u\), the corresponding component of \(D_m^{(\mathcal P)}\) is isomorphic to
\[
S_{\mathcal P}\!\left(\left\lfloor\frac m u\right\rfloor\right).
\]
By (i), \(\rho\) is optimal on each component. Since different components are disconnected, they may be translated independently. Centering each component at mean \(0\) therefore gives the displayed optimal representation of \(D_m^{(\mathcal P)}\).

It remains to check the missing divisibility relations. Suppose \(n<n'\) and \(n\mid n'\). Write
\[
n=us,
\qquad
n'=uvsw,
\]
where \(s,w\) are \(\mathcal P\)-smooth and \(u,v\) have no prime divisor in \(\mathcal P\). Put
\[
T=\left\lfloor\frac m u\right\rfloor,
\qquad
T'=\left\lfloor\frac m{uv}\right\rfloor.
\]
Then
\[
x_m(n')-x_m(n)
=
\Omega(w)+\mu_{\mathcal P}(T)-\mu_{\mathcal P}(T').
\]
If \(w>1\), then \(\Omega(w)\ge1\), and (ii) gives a difference at least \(1\).

If \(w=1\), then \(v>1\). By (iv), \(v\ge L\), so \(LT'\le T\). Hence (ii) and (iii) give
\[
\mu_{\mathcal P}(T)-\mu_{\mathcal P}(T')
\ge
\mu_{\mathcal P}(LT')-\mu_{\mathcal P}(T')
\ge1.
\]
Thus the centered representation is feasible for the full poset \(D_m\). Since adding constraints can only increase the minimum variance, it is optimal for both posets.
\end{proof}

We now apply the principle to \(\mathcal P=\{2,3\}\). In this case \(S_{\mathcal P}(T)\) is identified with \(Q_T\), so optimality follows from Lemma~\ref{lem:ferrers}, monotonicity from Lemma~\ref{lem:mu-monotone}, and the critical dilation is \(4\) by Corollary~\ref{cor:critical-dilation}. The first omitted prime is \(5\).

\begin{theorem}[$2,3$-reduction]\label{thm:23}
For every \(m\ge1\),
\[
\boxed{V(D_m)=V(D_m^{(2,3)}).}
\]
More precisely, if
\[
n=u2^a3^b,
\qquad (u,6)=1,
\]
then
\[
\boxed{
x_m(n)=a+b-\mu\!\left(\left\lfloor\frac m u\right\rfloor\right)
}
\tag{6.2}
\]
is an optimal representation of \(D_m\).
\end{theorem}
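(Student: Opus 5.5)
The proof is a direct application of Theorem~\ref{thm:core-reduction} with $\mathcal P=\{2,3\}$ and $L=4$. Our first step is to identify $S_{\{2,3\}}(T)$ with $Q_T$ via $2^a3^b\mapsto(a,b)$. Under this identification, divisibility among $\{2,3\}$-smooth integers becomes the coordinatewise order, and $\Omega(2^a3^b)=a+b$ is the natural height $\rho$. In particular $\mu_{\{2,3\}}(T)=\mu(T)$. One degenerate point needs a check: for $T\ge1$ the set $Q_T$ always contains $(0,0)$, so it is nonempty and $\mu(T)$ is defined.

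Next we verify the four hypotheses in turn.

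For (i), we use the remark after Figure~\ref{fig:q12}. The nonempty rows of $Q_T$ have lengths $w_b=1+\lfloor\log_2(T/3^b)\rfloor$, and consecutive lengths differ by $\lfloor\cdot\rfloor$-rounding of $\log_2 3\in(1,2)$, hence by $1$ or $2$. The final drop from the last nonempty row to $w_H=0$ is also $1$ or $2$, because that row has length $1$ or $2$: it has length at most $2$ since otherwise $3^{b+1}\le 4\cdot3^b\le T$ would give another row. So $Q_T=P(w)$ with strictly decreasing $w$, and Lemma~\ref{lem:ferrers} shows that $\rho$ is optimal.

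For (ii), Lemma~\ref{lem:mu-monotone} gives that $\mu$ is nondecreasing.

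For (iii), we take $L=4$. Lemma~\ref{lem:mu-scale} (equivalently Corollary~\ref{cor:critical-dilation}) gives $\mu(4T)-\mu(T)\ge1$ for all $T\ge1$.

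For (iv), every prime outside $\{2,3\}$ is at least $5\ge4$.

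With the hypotheses in place, Theorem~\ref{thm:core-reduction} yields $V(D_m)=V(D_m^{(2,3)})$ directly. It also shows that $x_m(n)=\Omega(2^a3^b)-\mu(\lfloor m/u\rfloor)=a+b-\mu(\lfloor m/u\rfloor)$ is optimal, which is exactly (6.2).

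One subtlety should be spelled out. In the proof of Theorem~\ref{thm:core-reduction}, when $w=1$ we have $v\ge5$, so $4T'\le T$. Here we use $4\lfloor m/(uv)\rfloor\le\lfloor 4m/(uv)\rfloor\le\lfloor m/u\rfloor$. Monotonicity of $\mu$ is then needed to pass from $\mu(4T')$ to $\mu(T)$, and the principle already handles that step.

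The only step that is not pure bookkeeping is (i), that is, checking that every $Q_T$ truly satisfies the Ferrers hypothesis including the boundary drop to $w_H=0$. I expect this to be the main point requiring care, and I would settle it by the explicit row-length argument above. Everything else is a citation of the preceding lemmas.
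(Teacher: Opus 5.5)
Your proposal is correct and follows the paper's proof exactly. The paper likewise applies Theorem~\ref{thm:core-reduction} with $\mathcal P=\{2,3\}$ and $L=4$, identifying $S_{\{2,3\}}(T)$ with $Q_T$ and taking (i)--(iii) from Lemma~\ref{lem:ferrers}, Lemma~\ref{lem:mu-monotone} and Corollary~\ref{cor:critical-dilation}; your explicit check that the last nonempty row has length $1$ or $2$ just spells out what the paper asserts after Figure~\ref{fig:q12}.
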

\begin{proof}
Apply Theorem~\ref{thm:core-reduction} with \(\mathcal P=\{2,3\}\) and \(L=4\).
\end{proof}

The arithmetic reason is therefore concentrated in
\[
\boxed{3<4<5.}
\]
The prime \(3\) lies below the unit-gain scale \(4=2^2\), while the next prime already lies beyond it. Thus the \(2\)- and \(3\)-directions must be retained, but every prime direction at least \(5\) is absorbed by the centering distance between components.

The pair \(\{2,3\}\) is sharp in two complementary senses.

\begin{proposition}[Minimal universal prime core]\label{prop:minimal-prime-core}
For a set \(\mathcal P\) of primes,
\[
\boxed{
V(D_m^{(\mathcal P)})=V(D_m)\text{ for every }m
\quad\Longleftrightarrow\quad
\{2,3\}\subseteq\mathcal P.
}
\tag{6.3}
\]
\end{proposition}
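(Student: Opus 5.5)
The plan is to prove the two implications separately. Sufficiency will come from a monotonicity (sandwich) argument combined with Theorem~\ref{thm:23}. Necessity will come from two small explicit counterexamples.

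The basic tool is that if a poset $P'$ on a fixed ground set has fewer order relations than $P$, then every representation of $P$ is also a representation of $P'$. Hence $V(P')\le V(P)$. This is the same remark used at the end of the proof of Theorem~\ref{thm:core-reduction}.

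\emph{Sufficiency.} Suppose $\{2,3\}\subseteq\mathcal P$. Every relation of $D_m^{(2,3)}$ is a relation of $D_m^{(\mathcal P)}$, and every relation of $D_m^{(\mathcal P)}$ is a relation of $D_m$. Monotonicity therefore gives
\[
V(D_m^{(2,3)})\le V(D_m^{(\mathcal P)})\le V(D_m),
\]
and Theorem~\ref{thm:23} says the two outer terms coincide. We should check that the definition of $D_m^{(\mathcal P)}$ makes sense for any set of primes, not just finite ones. This is harmless, since only primes at most $m$ matter.

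\emph{Necessity.} Suppose $\{2,3\}\not\subseteq\mathcal P$. It suffices to exhibit one $m$ with $V(D_m^{(\mathcal P)})<V(D_m)$, and we split into two cases.

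If $2\notin\mathcal P$, take $m=2$. Then $D_2^{(\mathcal P)}$ is the two-element antichain, so its minimum variance is $0$ (take a constant representation). On the other hand, $D_2$ is a two-element chain. Any representation satisfies $x(2)-x(1)\ge1$, so its variance is at least $1/4$.

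If $2\in\mathcal P$ but $3\notin\mathcal P$, take $m=3$. Here $D_3$ has the relations $1<2$ and $1<3$. The minimization with both constraints tight gives the optimum $(-2/3,1/3,1/3)$ with variance $2/9$. To justify this lower bound rigorously, we will use the filter criterion from the proof of Lemma~\ref{lem:ferrers} with the height function $0,1,1$. Alternatively, a direct one-variable minimization works: set $x(2)=x(1)+1+s$ and $x(3)=x(1)+1+t$ with $s,t\ge0$, and note that the variance is minimized at $s=t=0$. In $D_3^{(\mathcal P)}$ only $1<2$ survives and $3$ is isolated. The representation $(-1/2,1/2,0)$ is feasible and has variance $1/6<2/9$.

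The only step needing care is the exact value of $V(D_3)$, or at least the strict lower bound $V(D_3)>1/6$. This is a routine two-variable convex minimization, so there is no real obstacle.
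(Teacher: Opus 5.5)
Your proof is correct and follows the paper's argument: the sandwich \(D_m^{(2,3)}\subseteq D_m^{(\mathcal P)}\subseteq D_m\) combined with Theorem~\ref{thm:23} gives sufficiency, and the counterexamples \(m=2\) (variance \(0\) versus \(1/4\)) and \(m=3\) (variance at most \(1/6\) versus \(2/9\)) give necessity. Your justification of \(V(D_3)\ge 2/9\) is a useful detail the paper only asserts; note it is a two-variable minimization, not a one-variable one: the variance of \(\{0,a,b\}\) with \(a,b\ge1\) equals \(2(a^2-ab+b^2)/9\ge 2ab/9\ge 2/9\).
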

\begin{proof}
If \(\{2,3\}\subseteq\mathcal P\), then
\[
D_m^{(2,3)}\subseteq D_m^{(\mathcal P)}\subseteq D_m
\]
in the sense of inclusion of order relations. Adding constraints can only increase the minimum representation variance, so Theorem~\ref{thm:23} gives equality throughout.

Conversely, if \(2\notin\mathcal P\), then \(D_2^{(\mathcal P)}\) is an antichain and has optimal variance \(0\), whereas \(D_2\) is a two-element chain and has optimal variance \(1/4\). Suppose next that \(2\in\mathcal P\) but \(3\notin\mathcal P\). Then \(D_3^{(\mathcal P)}\) consists of the chain \(1<2\) and the isolated point \(3\), so its optimal variance is \(1/6\). For the full poset \(D_3\), the centered optimal representation has values \(-2/3,1/3,1/3\), and hence
\[
V(D_3)=\frac29>\frac16.
\]
Thus both \(2\) and \(3\) are necessary.
\end{proof}

The proposition shows that \(\{2,3\}\) is the smallest universal set of prime directions that determines the exact variance of every \(D_m\).

\subsection{Exact formula and limiting constant}
Theorem~\ref{thm:23} immediately gives an exact finite formula.

\begin{corollary}\label{cor:Vm}
For every $m\ge1$,
\[
\boxed{
V_m
=
\frac1m
\sum_{\substack{u\le m\\(u,6)=1}}
N\!\left(\left\lfloor\frac m u\right\rfloor\right)
 v\!\left(\left\lfloor\frac m u\right\rfloor\right).
}
\tag{6.4}
\]
\end{corollary}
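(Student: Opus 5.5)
The plan is to compute the variance of the explicit optimal representation (6.2) from Theorem~\ref{thm:23} directly. By that theorem, $V_m=\Var(x_m)$ under the uniform distribution on $\{1,\ldots,m\}$. Since $x_m$ is centered, or at least each component is, we get $V_m=\frac1m\sum_{n\le m}x_m(n)^2$. The work is to organize this sum along the components of $D_m^{(2,3)}$.

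First, I would partition $\{1,\ldots,m\}$ according to the $6$-free part. Every $n\le m$ is written uniquely as $n=u2^a3^b$ with $(u,6)=1$. For fixed $u\le m$ with $(u,6)=1$, the fibre is $\{u2^a3^b:2^a3^b\le m/u\}$. Since $2^a3^b$ is an integer, the condition $2^a3^b\le m/u$ is equivalent to $2^a3^b\le\lfloor m/u\rfloor$. The map $u2^a3^b\mapsto(a,b)$ is then a bijection of the fibre onto $Q_{T_u}$ with $T_u=\lfloor m/u\rfloor\ge1$. Under this bijection, $x_m(n)=a+b-\mu(T_u)$, which is exactly the deviation of the natural height from its mean on $Q_{T_u}$.

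Next, I would sum over each fibre. By the definition of $v(T)$,
\[
\sum_{(a,b)\in Q_{T_u}}\bigl(a+b-\mu(T_u)\bigr)^2=N(T_u)\,v(T_u).
\]
The sum of $x_m$ over each fibre vanishes by the definition of $\mu(T_u)$. Hence the overall mean of $x_m$ on $D_m$ is $0$, and the variance equals the mean square. Summing over $u$ and dividing by $m$ gives (6.4).

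There is essentially no obstacle here. The only points that need care are that the variance is taken under the uniform weight $1/m$, which is why the factor $\frac1m$ appears, and that the floor may be inserted. The optimality itself is entirely supplied by Theorem~\ref{thm:23}.
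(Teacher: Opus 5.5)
Your proposal is correct and follows the paper's route. You partition $\{1,\ldots,m\}$ by the $6$-free part $u$, identify each component with $Q_{\lfloor m/u\rfloor}$, and use that the optimal representation (6.2) is centered on every component, so $V_m$ is the size-weighted sum of the component variances $N(\lfloor m/u\rfloor)\,v(\lfloor m/u\rfloor)$ divided by $m$; your explicit checks that the floor may be inserted and that the global mean vanishes simply spell out what the paper states in one line.
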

\begin{proof}
Each component indexed by $u$ has $N(\lfloor m/u\rfloor)$ elements and, after centering, variance $v(\lfloor m/u\rfloor)$. All component means are $0$, so the variance of the union is the size-weighted average of the component variances.
\end{proof}

\begin{theorem}\label{thm:C}
The sequence \(V_m\) converges. More precisely,
\[
\boxed{
V_m=C+O\!\left(\frac{(\log(2m))^4}{m}\right),
}
\]
where
\[
\boxed{
C=\frac13\sum_{t\ge1}\frac{N(t)v(t)}{t(t+1)}.
}
\tag{6.5}
\]
Numerically,
\[
C=1.0367934201598\ldots.
\]
\end{theorem}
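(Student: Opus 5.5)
Starting from the exact formula (6.4), put \(F(t)=N(t)v(t)\), so that
\[
mV_m=\sum_{\substack{u\le m\\(u,6)=1}}F(\lfloor m/u\rfloor).
\]
The idea is to group the terms by the value \(t=\lfloor m/u\rfloor\). For fixed \(t\ge1\), the \(u\) with \(\lfloor m/u\rfloor=t\) form the interval \(m/(t+1)<u\le m/t\), of length \(m/(t(t+1))\). The integers coprime to \(6\) have density \(1/3\), and in any interval of length \(\ell\) their count is \(\ell/3+O(1)\). Hence
\[
mV_m=\sum_{t=1}^{m}F(t)\Bigl(\frac{m}{3t(t+1)}+O(1)\Bigr).
\]
Dividing by \(m\) gives \(V_m=\frac13\sum_{t\le m}F(t)/(t(t+1))+O\bigl(\frac1m\sum_{t\le m}|F(t)|\bigr)\). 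One has to take care that the error is not summed naively over all \(t\le m\).

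The needed size bound is elementary. \(N(t)\le(1+\log_2 t)(1+\log_3 t)=O((\log 2t)^2)\), and heights lie in \([0,\log_2 t]\), so \(v(t)=O((\log 2t)^2)\). Hence \(F(t)=O((\log 2t)^4)\). This makes the series (6.5) absolutely convergent, with tail \(\sum_{t>m}F(t)/t^2=O((\log 2m)^4/m)\).

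The \(O(1)\)-per-\(t\) error is the main obstacle. Summing it over all \(t\le m\) gives \(\frac1m\sum_{t\le m}F(t)\asymp(\log m)^4\), which is useless. So I would use the hyperbola split at \(K=\lfloor\sqrt m\rfloor\):
\begin{itemize}
\item For small \(t\le K\), group by \(t\) as above. The error is \(\frac1m\sum_{t\le K}O(F(t))=O(\sqrt m(\log m)^4/m)\). This is only \(O(m^{-1/2}(\log m)^4)\), which is weaker than claimed.
\item For large \(t>K\), equivalently \(u<m/K\approx\sqrt m\), sum directly over \(u\). This also gives \(O(\sqrt m\cdot(\log m)^4/m)\).
\end{itemize}
So the split alone gives \(m^{-1/2}\), and the rate \(O((\log m)^4/m)\) needs cancellation.

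To get it, I would note that \(F\) is a step function. It changes only at \(3\)-smooth-times-\(2\)-smooth values \(t=2^a3^b\), of which there are \(O((\log m)^2)\) up to \(m\). Rewrite \(\sum_{u}F(\lfloor m/u\rfloor)\) by Abel summation over the jump points \(\tau\):
\[
mV_m=\sum_{\tau\le m}\Delta F(\tau)\,\#\{u\le m/\tau:(u,6)=1\},
\]
where \(\Delta F(\tau)\) is the jump of \(F\) at \(\tau\). Each count equals \(m/(3\tau)+O(1)\). There are only \(O((\log m)^2)\) jumps, each of size \(O((\log m)^4)\) crudely (better bounds are available), so the total error is \(O(\mathrm{polylog}(m))\) after dividing by \(m\) times \(m\). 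Refining the jump sizes, \(\Delta F=O((\log\tau)^2)\) since adding one point changes \(Nv\) by \(O((\log)^2)\), gives total error \(O((\log m)^4/m)\). The main term \(\frac m3\sum_\tau \Delta F(\tau)/\tau\) equals \(\frac m3\sum_{t}F(t)(1/t-1/(t+1))\) by reverse summation, up to the tail already bounded. This recovers \(C\).

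The numerical value of \(C\) would then come from computing the partial sums to a large cutoff and bounding the tail with the polylog-over-\(t\) estimate.
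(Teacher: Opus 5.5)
Your argument is correct and is essentially the paper's: the identity $mV_m=\sum_{\tau\le m}\Delta F(\tau)\,\#\{u\le m/\tau:(u,6)=1\}$ is exactly what the paper obtains by summation by parts after grouping by $t=\lfloor m/u\rfloor$, so the bounded periodic error is weighted by the increments of $F$ and costs only $O\bigl(\sum_{\tau\le m}|\Delta F(\tau)|/m\bigr)$. The one difference is how you bound this total variation: the paper bounds it by $F(m)$ via monotonicity of $F$ (your one-point update $\frac{N}{N+1}(h-\mu)^2$ is nonnegative), while you bound it by the number of $3$-smooth jump points times the maximal jump; both give $O((\log 2m)^4)$, and the boundary term $F(m)/(m+1)$ left over from your reverse summation is of the same harmless size.
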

\begin{proof}
Put
\[
F(t)=N(t)v(t).
\]
Since both coordinates in \(Q_t\) are \(O(\log(2t))\),
\[
F(t)=O((\log(2t))^4),
\]
so the series in (7.10) converges absolutely.

We first note that \(F(t)\) is nondecreasing. When a new point of height \(h\) is added to a set of \(N\) heights with mean \(\mu\) and unnormalized variance \(F\), the new unnormalized variance is
\[
F+\frac{N}{N+1}(h-\mu)^2.
\]
Thus enlarging \(Q_t\) cannot decrease \(F(t)\).

Let
\[
B(x)=\#\{u\le x:(u,6)=1\}.
\]
Periodicity modulo \(6\) gives
\[
B(x)=\frac{x}{3}+E(x),
\]
where \(E\) is bounded on \([0,\infty)\). Grouping Corollary~\ref{cor:Vm} according to
\[
t=\left\lfloor\frac m u\right\rfloor
\]
gives
\[
V_m=\frac1m\sum_{t=1}^m F(t)A_m(t),
\]
where
\[
A_m(t)=B(m/t)-B(m/(t+1)).
\]
Hence
\[
A_m(t)
=
\frac{m}{3t(t+1)}
+E(m/t)-E(m/(t+1)).
\]
Write \(e_t=E(m/t)\). The discrepancy from the first \(m\) terms of (7.10) is therefore
\[
\frac1m\sum_{t=1}^m F(t)(e_t-e_{t+1}).
\]
Summation by parts yields
\[
\sum_{t=1}^m F(t)(e_t-e_{t+1})
=
F(1)e_1
+\sum_{t=2}^m\bigl(F(t)-F(t-1)\bigr)e_t
-F(m)e_{m+1}.
\]
Since \(E\) is bounded and \(F\) is nondecreasing, the right-hand side is \(O(F(m))\). Thus the discrepancy is
\[
O\!\left(\frac{(\log(2m))^4}{m}\right).
\]
Finally,
\[
\sum_{t>m}\frac{F(t)}{t(t+1)}
=
O\!\left(\frac{(\log(2m))^4}{m}\right)
\]
by comparison with the corresponding integral. Combining the two estimates proves the theorem.
\end{proof}

\begin{remark}\label{rem:prob}
The constant $C$ has a probabilistic interpretation. Let $\mathcal H=\{2^a3^b:a,b\ge0\}$, choose $N\in\mathcal H$ with
\[
\Pr(N=n)=\frac1{3n},
\]
let $U$ be independent and uniform on $(0,1)$, and put $T=\lfloor N/U\rfloor$. For $n\le t$,
\[
\Pr(N=n,T=t)=\frac1{3t(t+1)},
\]
so conditional on $T=t$, the variable $N$ is uniform on the $3$-smooth integers at most $t$. Hence
\[
C=\mathbb E\!\left[\Var(\Omega(N)\mid T)\right].
\]
Writing $N=2^A3^B$ shows that $A$ and $B$ are independent geometric variables with $\Var(A+B)=11/4$. The law of total variance therefore gives
\[
\boxed{\frac{11}{4}=C+\Var(\mu(T)).}
\]
Thus $C$ measures the residual variance of $\Omega(N)$ after the truncation parameter $T$ is known.
\end{remark}

\subsection{Repeated exponent patterns}
The preceding variance calculation applies to more than equal exponents. Fix an exponent vector
\[
\mathbf e=(e_1,\ldots,e_s),
\qquad e_j\ge1,
\]
and let \(M_{\mathbf e}(r)\) be the largest value of \(a(n)\) among integers whose prime-exponent multiset consists of \(r\) copies of \(\mathbf e\). Thus
\[
n=\prod_{\nu=1}^r\prod_{j=1}^s p_{\nu,j}^{e_j},
\]
with all primes \(p_{\nu,j}\) distinct. Put
\[
P_{\mathbf e}=D_{e_1+1}\times\cdots\times D_{e_s+1}.
\]
By Theorem~\ref{thm:fixed-pattern},
\[
M_{\mathbf e}(r)=\width(P_{\mathbf e}^r).
\]
Thus a repeated exponent pattern is the ordinary power of one fixed finite poset, so Engel's asymptotic product theorem applies directly. His product theorem for optimal representations gives
\[
V(P\times Q)=V(P)+V(Q)
\]
for finite uniformly weighted posets \cite[Theorem~4.6.6]{EngelBook}. Hence
\[
V(P_{\mathbf e})=\sum_{j=1}^sV_{e_j+1}.
\]
We obtain the following.

\begin{corollary}\label{cor:mixed-exp-asymp}
For every fixed exponent vector \(\mathbf e=(e_1,\ldots,e_s)\),
\[
\boxed{
M_{\mathbf e}(r)
\sim
\frac{\left(\prod_{j=1}^s(e_j+1)\right)^r}
{\sqrt{2\pi r\sum_{j=1}^sV_{e_j+1}}}
\qquad(r\to\infty).
}
\]
\end{corollary}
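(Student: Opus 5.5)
The corollary follows by assembling three earlier ingredients: Theorem~\ref{thm:fixed-pattern}, Engel's asymptotic product theorem, and Engel's additivity of variance. Nothing new has to be proved about the variances themselves.

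The first step identifies $M_{\mathbf e}(r)$ with a width. The exponent multiset of $n$ consists of $r$ copies of $\mathbf e$, so it is a single exponent vector of length $rs$. Theorem~\ref{thm:fixed-pattern} applies to it directly, since the maximum there is over distinct primes, exactly as in the definition of $M_{\mathbf e}(r)$. It gives
\[
M_{\mathbf e}(r)=\width\Bigl(\prod_{\nu=1}^r\prod_{j=1}^s D_{e_j+1}\Bigr)=\width(P_{\mathbf e}^r),
\]
after reordering the factors of the product poset, which does not change its isomorphism type.

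The second step applies Engel's asymptotic product theorem \cite[Theorem~7.2.1]{EngelBook} to the fixed finite poset $P=P_{\mathbf e}$ with uniform weight. This requires $P_{\mathbf e}$ not to be an antichain. Since every $e_j\ge1$, the factor $D_{e_1+1}$ contains the relation $1<2$. Hence $P_{\mathbf e}$ has a comparable pair, and its variance is positive. We also have $|P_{\mathbf e}|=\prod_j(e_j+1)$. The theorem then yields
\[
\width(P_{\mathbf e}^r)\sim\frac{|P_{\mathbf e}|^r}{\sqrt{2\pi rV(P_{\mathbf e})}} .
\]

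The last step evaluates $V(P_{\mathbf e})$ by additivity \cite[Theorem~4.6.6]{EngelBook}. Inducting on $s$ gives $V(P_{\mathbf e})=\sum_j V(D_{e_j+1})=\sum_j V_{e_j+1}$. The uniform weight on the product is the product of the uniform weights on the factors, so the theorem applies at each stage of the induction. Substituting this value into the asymptotic gives the stated formula.

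The main point to check is the applicability of the product theorems. Engel's statements concern uniformly weighted products, and the uniform measure on $P_{\mathbf e}$ must coincide with the product of the uniform measures on the $D_{e_j+1}$; this holds because the uniform measure on a finite product is the product of the uniform measures. Beyond this verification the proof is a direct chain of substitutions. The exact formula (6.4) for the individual $V_{e_j+1}$ is not needed.
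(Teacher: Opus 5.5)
Your proposal is correct and takes essentially the paper's approach: Theorem~\ref{thm:fixed-pattern} identifies $M_{\mathbf e}(r)$ with $\width(P_{\mathbf e}^r)$, Engel's asymptotic product theorem gives the asymptotic, and Engel's additivity $V(P\times Q)=V(P)+V(Q)$ evaluates $V(P_{\mathbf e})=\sum_j V_{e_j+1}$. Your check that $P_{\mathbf e}$ is not an antichain (using $e_j\ge1$) makes explicit a hypothesis of Engel's theorem that the paper leaves implicit.
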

\begin{proof}
By Theorem~\ref{thm:fixed-pattern}, $M_{\mathbf e}(r)=\width(P_{\mathbf e}^r)$. Engel's asymptotic product theorem therefore gives
\[
M_{\mathbf e}(r)
\sim
\frac{|P_{\mathbf e}|^r}{\sqrt{2\pi rV(P_{\mathbf e})}}.
\]
Since $|P_{\mathbf e}|=\prod_{j=1}^s(e_j+1)$ and $V(P_{\mathbf e})=\sum_{j=1}^sV_{e_j+1}$, the stated formula follows.
\end{proof}

\section{Minimum widths and open problems}
The maximal problem is governed by eliminating cross-divisibilities; the opposite extremal question asks how strongly they can collapse the universal product poset. For an exponent vector $\mathbf e=(e_1,\ldots,e_r)$, put
\[
L(\mathbf e)=
\min_{p_1,\ldots,p_r}
 a(p_1^{e_1}\cdots p_r^{e_r}),
\]
where the minimum is over distinct primes.

For a single exponent, Theorem~\ref{thm:primepower} gives
\[
L(e)=\left\lceil\frac{e+1}{2}\right\rceil.
\]
A simple construction gives
\[
L(e,2)\le2L(e).
\]
Indeed, for $m=p^e$ let $M$ be the least common multiple of the elements of $S(m)$ and choose a prime $q\equiv-1\pmod M$. Then
\[
S(mq^2)=S(m)\cup(q+1)S(m)\cup(q^2+q+1)S(m),
\]
and a minimum chain cover of $S(m)$ covers the first two copies consecutively, while a second copy of the same cover handles the third. Exact computations in the first cases suggest that this upper bound is sharp.

\begin{conjecture}\label{conj:L2}
For every $e\ge1$,
\[
\boxed{
L(e,2)
=2\left\lceil\frac{e+1}{2}\right\rceil.
}
\]
\end{conjecture}

More generally, determining $L(e_1,\ldots,e_r)$ asks how far arithmetic cross-divisibilities can collapse the universal index poset. This completes the contrast running through the paper: in the maximal problem cross-divisibilities are eliminated, in the squarefree spectrum they are tuned through a threshold, and in the variance problem only the prime directions $2$ and $3$ remain active.

\section*{Use of artificial intelligence}

\noindent OpenAI ChatGPT was used to suggest possible proof approaches, stress-test selected arguments, and assist with organization and wording. The author independently verified the mathematical claims, computations, and references and takes full responsibility for the final content.

\end{document}